\documentclass[a4paper,11pt]{amsart}

\usepackage[english]{babel}
\usepackage[a4paper,margin=2.4cm,marginparwidth=2.2cm]{geometry}
\usepackage{amsmath,amsthm,amssymb,mathtools}
\usepackage{thmtools}
\usepackage{xcolor}
\usepackage{paralist}
\usepackage{tikz}

\definecolor{Chocolat}{rgb}{0.36, 0.2, 0.09}
\definecolor{BleuTresFonce}{rgb}{0.215, 0.215, 0.36}
\usepackage[colorlinks,final]{hyperref}
\hypersetup{citecolor=BleuTresFonce, linkcolor=Chocolat, urlcolor=BleuTresFonce}
\usepackage[noabbrev,capitalize]{cleveref}

\newcommand{\RR}{\mathbb{R}}
\newcommand{\ZZ}{\mathbb{Z}}
\newcommand{\QQ}{\mathbb{Q}}
\newcommand{\NN}{\mathbb{N}}
\newcommand{\DD}{\mathbb{D}}

\DeclareMathOperator{\conv}{conv}
\DeclareMathOperator{\aff}{aff}
\DeclareMathOperator{\lin}{lin}          
\DeclareMathOperator{\dir}{dir}          
\DeclareMathOperator{\coker}{coker}
\DeclareMathOperator{\GL}{GL}
\DeclareMathOperator{\nvol}{nvol}
\DeclareMathOperator{\relint}{relint}
\DeclareMathOperator{\vertt}{vert}
\DeclareMathOperator{\Tight}{Tight}

\numberwithin{equation}{section}
\theoremstyle{plain}
\newtheorem{theorem}[equation]{Theorem}
\newtheorem{proposition}[equation]{Proposition}
\newtheorem{corollary}[equation]{Corollary}
\newtheorem{lemma}[equation]{Lemma}

\theoremstyle{definition}
\newtheorem{definition}[equation]{Definition}

\newtheorem{remark}[equation]{\sc Remark}
\newtheorem{example}[equation]{\sc Example}

\title{Regular dyadic triangulations of delta-matroid polytopes}

\hypersetup{
  pdftitle={Regular dyadic triangulations of delta-matroid polytopes},
  pdfauthor={Mathieu Vall\'ee},
  pdfsubject={Regular triangulations of delta-matroid polytopes and type B generalized permutohedra},
  pdfkeywords={delta-matroid, type B generalized permutohedron, regular triangulation,
    dyadic number, total dual dyadicness, lattice polytope}
}

\date{September 2026}

\author{Mathieu Vall\'ee}
\address{Universit\'e libre de Bruxelles, CP212, Boulevard du Triomphe, 1050 Brussels, Belgium}
\email{mathieu.vallee@protonmail.com}

\keywords{Delta-matroid, type $B$ generalized permutohedron, regular triangulation,
          dyadic number, total dual dyadicness, lattice polytope}

\subjclass[2020]{Primary 52B20, 52B40; Secondary 05B35, 90C10.}

\thanks{The author was supported by the Fonds de la Recherche Scientifique-FNRS under Grant
no.~T003325F}

\begin{document}

\begin{abstract}
Backman and Liu proved that every integral generalized permutohedron of
type~$A$, and in particular every matroid base polytope, admits a regular
unimodular triangulation. The analogous statement fails in type~$B$: the delta-matroid simplex
\[
    \conv\{\mathbf{0},\ e_1+e_2,\ e_1+e_3,\ e_2+e_3\}
\]
has normalized volume~$2$ and no lattice points other than its vertices, so it has no unimodular
triangulation. We show moreover that, up to the natural symmetries of the $0/1$ cube and deletion
of constant coordinates, it is the unique non-unimodular delta-matroid polytope that is a simplex.

We prove instead that every delta-matroid polytope admits a regular \emph{dyadic triangulation},
meaning a lattice triangulation whose maximal simplices have normalized volumes that are powers
of two. More generally, every integral type~$B$ generalized permutohedron admits such a
triangulation. The main lattice-theoretic ingredient is that the type~$B$ root configuration forms
a totally dyadic system, a $2$-local analogue of total unimodularity.

As a consequence, these polytopes satisfy a dyadic version of the integer decomposition property. In each dimension the corresponding exponent can be chosen uniformly, even though ordinary integer decomposition can fail for delta-matroid polytopes.
\end{abstract}

\maketitle


\section*{Introduction}
A regular unimodular triangulation gives a lattice polytope strong arithmetic and algebraic
properties, including the integer decomposition property and a squarefree initial degeneration of
its toric ideal. Backman and Liu proved that every integral generalized permutohedron of type~$A$
admits such a triangulation~\cite{Backman_Liu_2025}. Their proof combines deletion--contraction,
total unimodularity of the type~$A$ root configuration, and genericity with respect to the
resonance arrangement. We ask what survives for type~$B$, whose $0/1$ generalized
permutohedra are precisely the delta-matroid polytopes.

Throughout, lattice notions and normalized volume refer to $\ZZ^n$. Since $B_n$ and $C_n$ have
the same root directions and Coxeter fan, our results may also be phrased in type~$C$ provided this
lattice is retained.\footnote{For the simplex below, the root lattice
$Q(C_3)=\{z\in\ZZ^3:\sum_i z_i\in2\ZZ\}$ makes it unimodular, whereas the coweight lattice
$P^\vee(C_3)=\ZZ^3\cup((\tfrac12,\tfrac12,\tfrac12)+\ZZ^3)$ contains the interior point
$(\tfrac12,\tfrac12,\tfrac12)$; stellar subdivision at that point is regular and unimodular.}
We use type~$B$ terminology because its short roots $e_i$ are primitive in $\ZZ^n$.

The unimodular statement already fails for the simplex
$T=\conv\{\mathbf0,e_1+e_2,e_1+e_3,e_2+e_3\}$ in
\Cref{figure:3-simplex_det_2}. It is the polytope of the even delta-matroid
$\{\emptyset,\{1,2\},\{1,3\},\{2,3\}\}$, hence is also of type~$D$. Its normalized volume is
$2$, while its only lattice points are its vertices, so it has no unimodular triangulation.
Moreover, \Cref{proposition:delta_matroid_simplex_volume} shows that, up to cube symmetries and
constant coordinates, this is the unique non-unimodular delta-matroid polytope which is itself a
simplex.

\begin{figure}
\centering
\begin{tikzpicture}[scale=3]
\draw[->,dotted] (0,0,0) -- (1.2,0,0);
\draw[->,dotted] (0,0,0) -- (0,1.2,0);
\draw[->,dotted] (0,0,0) -- (0,0,1.2);
\draw[dashed] (0,0,0) node[left] {$\mathbf{0}$}
    -- (1,1,0) node[right] {$(1,1,0)$};
\draw[thick] (0,0,0)
    -- (0,1,1) node[left] {$(0,1,1)$};
\draw[thick] (0,0,0)
    -- (1,0,1) node[right] {$(1,0,1)$};
\draw[thick] (1,1,0) -- (0,1,1) -- (1,0,1) -- cycle;
\end{tikzpicture}
\caption{The delta-matroid polytope of
$\{\emptyset,\{1,2\},\{1,3\},\{2,3\}\}$: a $0/1$ simplex with type~$B$
(indeed type~$D$) edge directions, normalized volume $2$, and no lattice points besides its
vertices. Hence it admits no unimodular triangulation.%
\label{figure:3-simplex_det_2}}
\end{figure}

This obstruction suggests localizing at $2$. We call a lattice simplex \emph{dyadic} if its
normalized volume is a power of two, equivalently if it becomes unimodular over
$\DD=\ZZ[1/2]$. The type~$B$ root directions form a totally dyadic system: every subconfiguration
generates the saturated lattice of its span over $\DD$ (\Cref{lemma:type_B_TDS}). This is the
$2$-local replacement for total unimodularity, closely related to the dyadic generating sets of
Abdi, Cornu\'ejols, Guenin and Tun\c{c}el
~\cite{Abdi_Cornuejols_Guenin_Tuncel_2024,Abdi_Cornuejols_Guenin_Tuncel_2025}.

\begin{theorem}[= \Cref{theorem:main}]
Every delta-matroid polytope admits a regular dyadic triangulation.
\end{theorem}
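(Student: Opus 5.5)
The plan is to adapt the Backman--Liu strategy, replacing total unimodularity with the totally dyadic property of the type~$B$ root configuration (\Cref{lemma:type_B_TDS}). We prove the stronger statement for every integral type~$B$ generalized permutohedron $P\subset\RR^n$, meaning a lattice polytope whose edge directions are parallel to vectors in $\Phi_B=\{\pm e_i\}\cup\{\pm e_i\pm e_j\}$. The argument is by induction on dimension and on the number of lattice points.

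\textbf{Step 1: coarse regular subdivision.} Choose a generic height function on the lattice points of $P$. Following Backman--Liu, take it to be a function whose lifting produces a regular subdivision into cells that are again type~$B$ generalized permutohedra. For example, cut $P$ by the hyperplane families $x_i=k$ and $x_i\pm x_j=k$ with $k\in\ZZ$, which are normals of the type~$B$ fan. Each such cut is induced by a convex piecewise-linear height, such as $\sum_i f(x_i)+\sum_{i<j}\bigl(f(x_i+x_j)+f(x_i-x_j)\bigr)$ with $f$ convex and linear on each $[k,k+1]$. Refining by all of them is a common refinement of regular subdivisions, hence regular. Every cell is an intersection of $P$ with a type~$B$ ``alcove-like'' region.

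\textbf{Step 2: the cells are dyadic-friendly.} The key claim is the following. Every cell $C$ is a polytope whose facet normals lie in the type~$B$ root system, since these are the normals of $P$ (type~$B$ fan) together with the cutting hyperplanes. Its vertices are integral, because the constraint matrix of a vertex is a subsystem of type~$B$ normals. By \Cref{lemma:type_B_TDS}, any basis of such a subsystem has determinant a power of two up to sign, so its vertices lie in $\DD^n$. Moreover, with integral right-hand sides in the alcove triangulation, the extra cuts at half-integral levels could be chosen to force integrality. Then triangulate each cell by a pulling (placing) refinement at its vertices, which is regular, so the composite is regular. Each maximal simplex $\sigma$ obtained has edge directions, or more precisely a vertex--edge matrix, that is a product of subconfigurations of $\Phi_B$. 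The totally dyadic property shows that $\det$ of $d$ linearly independent vectors from $\Phi_B$, relative to the saturated lattice of their span, is a power of two. Hence $\nvol(\sigma)$ is a power of two once the simplex is spanned at one vertex by root vectors.

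\textbf{Main obstacle.} The hard step is guaranteeing that every maximal simplex is spanned, from some vertex, by edges parallel to roots, with multiplicity one each. A general pulling triangulation of a cell produces diagonals that are not roots. For type~$A$ this is resolved by the alcoved-polytope triangulation. Here I would first try the type~$C$ affine alcove triangulation: cells cut by all $x_i=k$, $x_i\pm x_j=k$ are unions of $C_n$-alcoves, and these alcoves are simplices with $\ZZ^n$-volume a power of two, indeed $2^{\,\cdot}$ via the coweight lattice remark. If vertices at half-integers appear, I would instead use only integral levels together with \Cref{lemma:type_B_TDS} and the $\DD$-unimodularity of the resulting simplices, checking vertex integrality by total dyadicness plus a parity argument.
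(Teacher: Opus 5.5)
Your proposal has a genuine gap. Neither step produces a lattice triangulation, and the step you flag as the main obstacle is never resolved.

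\textbf{Step 1 fails.} The cuts $x_i\pm x_j=k$ do not give a lattice subdivision. Already for the delta-matroid polytope $[0,1]^2$, the cuts $x_1=x_2$ and $x_1+x_2=1$ cross at $(\tfrac12,\tfrac12)$. Each cut alone is a regular triangulation of the four corners, but their common refinement has a non-lattice vertex, so no height on $P\cap\ZZ^2$ induces it. Your remark that the vertices land in $\DD^n$ rather than $\ZZ^n$ concedes this, and ``half-integral levels'' cannot repair it. Only the short-root cuts $x_i=k$ preserve integrality, which is what \Cref{lemma:dicing} uses, and on a $0/1$ polytope they cut nothing.

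\textbf{Step 2 confuses edge directions with facet normals.} A type~$B$ generalized permutohedron has its \emph{edges} along roots. Its facet normals are the signed subset vectors $e_{X^+}-e_{X^-}$ of \Cref{proposition:facet_normals}; the simplex of \Cref{figure:3-simplex_det_2} has the facet $x_1+x_2+x_3\le 2$. So the cells are neither alcoved nor unions of alcoves, and the alcove route (the facet-normal result of \Cref{remark:AB_sides}) is unavailable. In any case, affine $B_n$/$C_n$ alcoves have half-integral vertices, so they are not lattice simplices of $\ZZ^n$; the footnote you invoke concerns a different lattice.

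\textbf{The main obstacle stays open.} Your criterion ``spanned at one vertex by root vectors'' does imply dyadicness, by \Cref{lemma:type_B_TDS} and \Cref{lemma:dyadic_bridge}. However, you give no mechanism that enforces it, and pulling triangulations violate it.

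\textbf{The missing idea: do not ask for root edges at all.} The paper's route is as follows.
\begin{itemize}
\item Slice by $x_1\in\{0,1\}$ (deletion--contraction). The slices $P_0,P_1$ are delta-matroid polytopes in $\RR^{n-1}$.
\item Use the height $x_1\ell(y)+\varepsilon\varphi'(y)$, with $\ell$ generic for the symmetric resonance arrangement. The coarse cells are then $\conv(\{0\}\times F_0\cup\{1\}\times F_1)$, with $F_i$ a face of $P_i$.
\item $\dir(F_0)$ and $\dir(F_1)$ are flats of $\mathcal B_{n-1}$ (\Cref{lemma:key_1}), and $\ell$ must be constant on their intersection. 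Genericity therefore forces them to be independent.
\item Total dyadicness of $B^+$ makes them dyadic-complementary.
\item The $\varepsilon$-refinement triangulates each slice dyadically, by induction on $n$.
\item The join lemma (\Cref{lemma:dyadic_join}) shows each joined simplex is dyadic. Its bridge edge $w_1-w_0$ is in general not a root, but having first coordinate $1$ is all that is needed.
\end{itemize}
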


The proof follows the Backman--Liu deletion--contraction induction. A generic perturbation for a
symmetric resonance arrangement controls the face-direction spaces; total dyadicness, the lattice
criterion of \Cref{lemma:dyadic_bridge}, and the join lemma then replace the unimodular part of
their argument. Thus the method is recognizably parallel, but its arithmetic content is genuinely
different and the relaxation is sharp already for $T$.

For an arbitrary integral type~$B$ generalized permutohedron, dicing by the integer coordinate
hyperplanes produces translates of delta-matroid polytopes
~\cite[Proposition~2.12]{Eur_Fink_Larson_Spink_2024}; the standard regular dicing construction
~\cite[Section~2.1.3]{Haase_Paffenholz_Piechnik_Santos_2021} then globalizes the triangulations.

\begin{theorem}[= \Cref{corollary:typeB_GP}]
Every integral type~$B$ generalized permutohedron, with integrality taken in $\ZZ^n$,
admits a regular dyadic triangulation. Equivalently at the level of root directions and Coxeter
fans, the same statement holds in type~$C$ with this same lattice convention.
In particular, this holds for every integral type~$D$ generalized permutohedron.
\end{theorem}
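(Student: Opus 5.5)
The proof reduces to the delta-matroid case (the first theorem, \Cref{theorem:main}) by dicing, as the introduction indicates. Let $P\subset\RR^n$ be an integral type~$B$ generalized permutohedron. Its edge directions lie in $\{e_i,\ e_i\pm e_j\}$, and it is cut by the arrangement $\mathcal H$ of all integer coordinate hyperplanes $x_i=k$, $k\in\ZZ$. By \cite[Proposition~2.12]{Eur_Fink_Larson_Spink_2024}, every cell $P\cap(z+[0,1]^n)$ with $z\in\ZZ^n$ is a translate of a delta-matroid polytope. Two points need to be checked:
\begin{itemize}
\item such a cell may be lower-dimensional, but it is then a delta-matroid polytope living in a face of the cube;
\item every face of a cell is again a cell of the dicing of a face of $P$. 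Faces of type~$B$ generalized permutohedra are again of this type, so faces of cells are delta-matroid polytopes up to translation.
\end{itemize}

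Next I would invoke \Cref{theorem:main} on each cell. To glue the pieces, the triangulations must agree on common faces. I would follow the standard regular dicing construction of \cite[Section~2.1.3]{Haase_Paffenholz_Piechnik_Santos_2021}. Take the convex piecewise-linear height $\omega_0(x)=\sum_i\sum_{k\in\ZZ}\max(0,x_i-k)$, whose domains of linearity on $P$ are exactly the cells. Then add $\varepsilon\,\omega_1$, where $\omega_1$ is a height function on the lattice points of $P$ that restricts, on every cell, to a height inducing a regular dyadic triangulation. For small $\varepsilon>0$, $\omega_0+\varepsilon\omega_1$ then induces a regular subdivision refining the dicing, and its restriction to each cell is the triangulation induced by $\omega_1$ there.

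The main obstacle is producing one global $\omega_1$. The triangulation of a cell from \Cref{theorem:main} comes from its own height function, and adjacent cells need compatible choices on their shared faces. I would first check whether the proof of \Cref{theorem:main} produces heights of a uniform shape: the Backman--Liu method uses a height such as a generic linear functional composed with a symmetric-resonance-generic function of the vertex coordinates, defined intrinsically on $\{0,1\}^n$-points. If so, I would take $\omega_1(x)=f(x-\lfloor x\rfloor_{\text{cell}})$, or better a globally defined function such as $\sum_{S}c_S\bigl(\sum_{i\in S}\pm x_i\bigr)^2$-type generic convex terms. These are consistent across cells, and their restriction to any delta-matroid cell is generic in the required sense.

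If that fails, the fallback is a pulling-type refinement. Pull the lattice points of $P$ in a fixed global order, at heights $\varepsilon^{k}$. This is automatically regular and compatible across faces. One then checks dyadicity of the resulting simplices via \Cref{lemma:type_B_TDS} and \Cref{lemma:dyadic_bridge}: each simplex lies in a cell, its vertices are $0/1$ points of a delta-matroid polytope, and their edge vectors are spanned over $\DD$ by type~$B$ root directions. Their determinant being a power of two would follow from total dyadicness, provided the simplex edges can be taken along root directions, which is where the join lemma would enter.

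Finally, the type~$C$ statement is immediate: $B_n$ and $C_n$ have the same root directions and Coxeter fan, so the class of polytopes is the same with the lattice fixed to $\ZZ^n$. Type~$D$ generalized permutohedra are among the type~$B$ ones, since $D_n$ roots $\pm e_i\pm e_j$ are $B_n$ roots and the type~$D$ fan is coarsened by the type~$B$ fan. Hence the last sentence follows as well.
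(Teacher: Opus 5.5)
You reduce to delta-matroid cells as the paper does, and you correctly locate the crux: one weight on $P\cap\ZZ^n$ whose restriction to every cell induces a dyadic triangulation. But none of your candidates supplies it.

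\begin{itemize}
\item The weight $f(x-\lfloor x\rfloor_{\text{cell}})$ is not a function of $x$. A lattice point lies in $2^n$ unit cubes $c+[0,1]^n$, each giving different local coordinates $x-c$.
\item The other two candidates fail outright, because \Cref{theorem:main} concerns one specific height, not generic ones. Take the delta-matroid polytope $P=\conv\{e_X : X\subseteq[4],\ |X|\le 3\}$ (the independent sets of $U_{3,4}$) and put $s=x_1+\cdots+x_4$. On a vertex $e_X$ one has $\sum_{i<j}(x_i-x_j)^2=|X|(4-|X|)$. This equals the linear function $s$ exactly when $|X|\in\{0,3\}$ and exceeds it otherwise. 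So the simplex of \Cref{example:non_dyadic}, of normalized volume $3$, is a cell of the subdivision induced by this sum of convex squares. It stays a cell under any sufficiently small perturbation of the weight.
\item The same simplex is a maximal cell of the pulling triangulation that pulls $\mathbf 0$ first, since the facet $P\cap\{s=3\}$ is a simplex. Total dyadicness cannot rescue this fallback. Edges such as $e_{\{1,2,3\}}-\mathbf 0$ are not root directions, and the join lemma needs independence of the slice directions. In the paper that independence comes from the $B$-genericity of $\ell$ in the specific coarse subdivision.
\end{itemize}

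The missing idea is that the height $\varphi=\sum_j\varepsilon^{j-1}x_j\ell_j(x_{j+1},\ldots,x_n)$ of \Cref{theorem:main} is defined on all of $\RR^n$ and is translation-covariant modulo affine functions. For $c\in\ZZ^n$ one has $\ell_j(c+z)=\ell_j(z)+\kappa_j$ with $\kappa_j$ constant, so
\[
    (c_j+z_j)\,\ell_j(c+z)=z_j\,\ell_j(z)+\bigl(c_j\ell_j(z)+c_j\kappa_j+\kappa_j z_j\bigr),
\]
and hence $\varphi(c+z)-\varphi(z)$ is affine in $z$. The threshold $\varepsilon_n$ is uniform over all delta-matroids on $[n]$. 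So the single weight $q+\delta\varphi$, with $q=\sum_i x_i^2$ inducing the dicing and $\delta>0$ small, restricts on each cell $c+P_{\mathcal D}$ to the translate of the dyadic triangulation that $\varphi$ induces on $P_{\mathcal D}$. Compatibility on shared faces is then automatic.

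Two small points:
\begin{itemize}
\item Your $\omega_0$ diverges with $k$ ranging over all of $\ZZ$. Restrict to $0\le k\le R$ after translating $P$ into $[0,R]^n$; there it equals $\tfrac12\sum_i(x_i^2+x_i)$ on lattice points.
\item The $B_n$ fan refines the $D_n$ fan, not the reverse. Your edge-direction argument for type~$D$ is fine.
\end{itemize}
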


Ordinary integer decomposition can also fail for delta-matroid polytopes
~\cite{Morales_2026}. Dyadic triangulations nevertheless give a dimension-uniform saturation.

\begin{theorem}[= \Cref{corollary:dyadic_IDP}]
For every $n$ there exists an integer $m_n\ge 0$ such that, for every integral type~$B_n$
generalized permutohedron $P\subseteq\RR^n$, every $k\ge1$, and every
$x\in kP\cap\ZZ^n$, there are lattice points
$p_1,\ldots,p_{2^{m_n} k}\in P\cap\ZZ^n$ such that
$2^{m_n}x=p_1+\cdots+p_{2^{m_n}k}$.
\end{theorem}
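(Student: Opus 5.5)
The plan is to combine the previous theorem (every integral type~$B$ generalized permutohedron has a regular dyadic triangulation) with a uniform bound on the volumes of the simplices that can occur. Then the standard argument that unimodular triangulations give the integer decomposition property is localized at~$2$.

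\emph{Step 1: uniform bound on simplex volumes.} Let $\Delta=\conv\{v_0,\dots,v_d\}$ be a maximal simplex of such a triangulation of $P$, with $d=\dim P\le n$. Its edge vectors lie in $\ZZ^n$ and in the linear span of $P-P$. That span is cut out by type~$B$ root directions, so its saturated lattice $L=\lin(P-P)\cap\ZZ^n$ is the reference lattice for normalized volume. The normalized volume of $\Delta$ is the index $[L:\Lambda]$, where $\Lambda$ is the lattice spanned by $v_1-v_0,\dots,v_d-v_0$. This index is a power of two, but a priori unbounded, since dicing only bounds the shape of the pieces. To fix this I use the actual construction: the triangulation refines the dicing into translates of delta-matroid polytopes, and those are triangulated using vertices in $\{0,1\}^n$ up to translation. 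Each maximal simplex is therefore a translate of a $0/1$ simplex in $\RR^n$. Its normalized volume is at most the maximal absolute value of an $n\times n$ $0/1$ minor, computed relative to $L$ by taking a suitable $d\times d$ minor. This is bounded by a constant $c_n$ depending only on $n$, for instance Hadamard's bound. Since the volume is a power of two, it equals $2^{j}$ with $2^j\le c_n$. I set $m_n:=\lfloor\log_2 c_n\rfloor$.

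\emph{Step 2: decomposition inside one dyadic simplex.} Let $x\in kP\cap\ZZ^n$. The dilated triangulation $k\mathcal T$ covers $kP$, so $x\in k\Delta$ for some maximal simplex $\Delta$. Write $x=\sum_i\lambda_i v_i$ with $\lambda_i\ge0$ and $\sum\lambda_i=k$. The point $x-kv_0$ lies in $L$ (since $x$ is integral and in the affine span), and $[L:\Lambda]=2^j$ divides $2^{m_n}$. Hence $2^{m_n}(x-kv_0)\in\Lambda$, so the coefficients $2^{m_n}\lambda_i$ are integers. Here integrality of the coefficients $2^{m_n}\lambda_i$ for $i\ge1$ follows from the coordinates with respect to the basis of $\Lambda$, and $2^{m_n}\lambda_0=2^{m_n}k-\sum_{i\ge1}2^{m_n}\lambda_i$ is then also an integer. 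The integers $\mu_i=2^{m_n}\lambda_i$ are nonnegative and sum to $2^{m_n}k$. Therefore $2^{m_n}x=\sum_i\mu_i v_i$ is a sum of $2^{m_n}k$ vertices of $\Delta$, each a lattice point of $P$.

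\emph{Main obstacle.} The delicate point is Step~1: making sure the simplex volumes are bounded uniformly in $P$. This requires looking into the dicing construction to confirm that every maximal simplex fits in a unit cube translate. If the globalization in \Cref{corollary:typeB_GP} introduced larger simplices, I would instead directly use the dicing into delta-matroid pieces. Each point of $kP$ lies in $k$ times a cube-diced piece, but the dilation does not commute with dicing. So the fallback is to apply the theorem to $P$ itself and note that $k\mathcal T$ covers $kP$. Only the shape of the simplices of $\mathcal T$ matters, and those are the $0/1$-type simplices arising within unit cubes.
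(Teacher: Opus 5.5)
Your proof is correct and follows essentially the paper's route: the triangulation of \Cref{corollary:typeB_GP} refines the unit-cube dicing, so its maximal simplices have uniformly bounded power-of-two volumes, and the standard argument of \Cref{proposition:dyadic_triangulation_decomposition} then applies. The only difference is how you get the uniform bound. The paper takes the maximum over the finitely many $\varphi$-triangulations of delta-matroid polytopes on $[n]$, whereas you bound the normalized volume (the gcd of the $d\times d$ minors of an edge matrix with entries in $\{-1,0,1\}$) by Hadamard's inequality; this is equally valid and gives an explicit $m_n$, and your final hedging paragraph is unnecessary because refinement of the dicing is part of the construction.
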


\section{Preliminaries}\label{section:preliminaries}

\subsection{Notation}

We write $[n]=\{1,\ldots,n\}$ and let $e_1,\ldots,e_n$ be the canonical basis of $\RR^n$. For
$X\subseteq[n]$ we write $x_X\coloneqq\sum_{i\in X}x_i$.

For a set $D$ of vectors of $\RR^n$ we write $\lin(D)$ for its linear span and $\ZZ\langle
D\rangle$ for the lattice it generates over $\ZZ$. For a nonempty polytope $P\subseteq\RR^n$ we
write $\vertt(P)$ for its vertex set, $\aff(P)$ for its affine hull and
\[
    \dir(P)\;\coloneqq\;\aff(P)-\aff(P)\;=\;\lin\{p-q\mid p,q\in P\}
\]
for its \emph{direction space}, the linear subspace of which $\aff(P)$ is a translate.

All polytopes in this paper are \emph{lattice} polytopes, i.e.\ $\vertt(P)\subseteq\ZZ^n$. A
subspace $V\subseteq\RR^n$ is \emph{rational} if it is spanned by rational vectors; its
\emph{saturated lattice} is $\Lambda_V\coloneqq V\cap\ZZ^n$, a sublattice of $\ZZ^n$ of rank
$\dim V$ which is saturated, i.e.\ $\ZZ^n/\Lambda_V$ is torsion-free. For a $d$-dimensional lattice
polytope $P$ we write $\nvol(P)$ for its normalized volume, that is, $d!$ times its Euclidean
volume computed relative to the lattice $\dir(P)\cap\ZZ^n$; it is a positive integer. If $P$ is subdivided into lattice polytopes, its normalized
volume is the sum of the normalized volumes of the full-dimensional cells.

\subsection{Delta-matroids and their polytopes}

For $X\subseteq[n]$, let $e_X\coloneqq\sum_{i\in X}e_i$. The \emph{characteristic polytope} of a
collection $\mathcal C\subseteq2^{[n]}$ is
$P_{\mathcal C}\coloneqq\conv\{e_X\mid X\in\mathcal C\}$, with $P_\emptyset=\emptyset$.
Nonempty collections thus correspond bijectively to nonempty $0/1$ polytopes.

For $i\in[n]$, the \emph{contraction} $\mathcal C/i$ consists of the sets $X\setminus\{i\}$ with
$X\in\mathcal C$ and $i\in X$, whereas the \emph{deletion} $\mathcal C\setminus i$ consists of
the $X\in\mathcal C$ with $i\notin X$. Both are regarded as collections on
$[n]\setminus\{i\}$.

\begin{proposition}\label{proposition:polytope_contr_dele}
    Let $\mathcal C$ be a collection of subsets of $[n]$. Then
    \[
        \{0\}\times P_{\mathcal C\setminus 1}=P_{\mathcal C}\cap\{x_1=0\}
        \qquad\text{and}\qquad
        \{1\}\times P_{\mathcal C/1}=P_{\mathcal C}\cap\{x_1=1\}.
    \]
\end{proposition}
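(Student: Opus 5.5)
The plan is to prove both equalities by double inclusion, using the fact that $\{x_1=0\}$ and $\{x_1=1\}$ are supporting hyperplanes of the cube $[0,1]^n$, hence of $P_{\mathcal C}\subseteq[0,1]^n$. The second equality follows from the first by the cube symmetry $x_1\mapsto 1-x_1$, which exchanges deletion and contraction at $1$. So we only treat $x_1=0$ in detail; the symmetric argument (or the same argument with $1-x_1$ as the valid inequality) handles $x_1=1$.

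For the inclusion $\{0\}\times P_{\mathcal C\setminus1}\subseteq P_{\mathcal C}\cap\{x_1=0\}$, observe that for each $X\in\mathcal C\setminus 1$ the point $(0,e_X)\in\RR\times\RR^{[n]\setminus\{1\}}$ is exactly $e_X\in\RR^n$ with $X\in\mathcal C$ and $1\notin X$. Thus it lies in $P_{\mathcal C}$ and in $\{x_1=0\}$. Both sides are convex, so the inclusion passes to convex hulls. For $x_1=1$ the analogous observation is that $(1,e_{X\setminus\{1\}})=e_X$ for $X\ni1$.

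The reverse inclusion is the only point with content, and it is the step that needs care. Take $x\in P_{\mathcal C}$ with $x_1=0$ and write $x=\sum_{X\in\mathcal C}\lambda_Xe_X$ as a convex combination. Then
\[
0=x_1=\sum_{X\ni1}\lambda_X,
\]
with all $\lambda_X\ge0$, so $\lambda_X=0$ whenever $1\in X$. Hence $x$ is a convex combination of the $e_X$ with $X\in\mathcal C\setminus1$, and dropping the first coordinate gives a point of $P_{\mathcal C\setminus 1}$.

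For $x_1=1$ the same argument uses $1-x_1=\sum_{X\not\ni1}\lambda_X=0$. This forces all the weight onto sets containing $1$, and the remaining coordinates then give $\sum\lambda_X e_{X\setminus\{1\}}\in P_{\mathcal C/1}$.

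Finally, the degenerate cases should be checked. If $\mathcal C\setminus 1$ is empty, both sides are empty: the left by the convention $P_\emptyset=\emptyset$, and the right because the argument above would force every $\lambda_X$ to vanish, which is impossible for a convex combination. The same reasoning covers the case $\mathcal C/1=\emptyset$.
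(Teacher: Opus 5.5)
Your argument is correct and complete. That includes the empty cases and the identification $(1,e_{X\setminus\{1\}})=e_X$ for $X\ni 1$. The paper states this proposition without proof, treating it as immediate. Your convex-combination computation is exactly the standard verification: since $0\le x_1\le 1$ holds on $P_{\mathcal C}$, the hyperplanes $\{x_1=0\}$ and $\{x_1=1\}$ cut out faces of $P_{\mathcal C}$, and a face of the $\conv$ of finitely many points is the convex hull of those points lying on it.
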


Following Bouchet~\cite{Bouchet_1987}, a \emph{delta-matroid} on $[n]$ is a nonempty collection
$\mathcal D\subseteq2^{[n]}$ satisfying symmetric exchange: for $X,Y\in\mathcal D$ and
$x\in X\mathbin\Delta Y$, there is a $y\in X\mathbin\Delta Y$, possibly $y=x$, such that
$X\mathbin\Delta\{x,y\}\in\mathcal D$. Recall also that the type~$B_n$ root directions are
represented by $e_i$ and $e_i\pm e_j$ for $i\ne j$.

\begin{proposition}[\cite{Bouchet_1987}]\label{proposition:char_poly_delta_mat}
    A $0/1$ polytope is the characteristic polytope of a delta-matroid if and only if all of its
    edges are parallel translates of roots of type~$B$, if and only if all of its edges have length
    $1$ or $\sqrt 2$.
\end{proposition}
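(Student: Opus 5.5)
The statement to prove is \Cref{proposition:char_poly_delta_mat}: for a $0/1$ polytope $P=P_{\mathcal C}$, three conditions are equivalent. (a) $\mathcal C$ is a delta-matroid. (b) Every edge of $P$ is parallel to some $e_i$ or $e_i\pm e_j$. (c) Every edge has length $1$ or $\sqrt2$.

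\emph{The easy equivalence.} I would first dispose of (b)$\Leftrightarrow$(c). An edge of $P$ joins $e_X$ and $e_Y$, so its direction is $e_X-e_Y$, a $0/\pm1$ vector with support $X\mathbin\Delta Y$. Its length is $\sqrt{|X\mathbin\Delta Y|}$. Length $1$ or $\sqrt2$ means the support has size $1$ or $2$. That is exactly the condition that the direction is $\pm e_i$ or $\pm e_i\pm e_j$.

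\emph{(b)$\Rightarrow$(a).} The key tool is the simplex-method fact that from any vertex, the edges at that vertex generate the tangent cone. Take $X,Y\in\mathcal C$ and $x\in X\mathbin\Delta Y$. By the cube symmetry $Z\mapsto Z\mathbin\Delta X$, which preserves both the delta-matroid axioms and condition (b), I may assume $X=\emptyset$. Then $x\in Y$, and I consider the linear functional $\ell=$ coordinate $x$.
\begin{itemize}
\item Since $e_Y-e_\emptyset$ lies in the tangent cone at the vertex $\mathbf0$ and $\ell(e_Y)=1>0$, some edge direction $v$ from $\mathbf0$ satisfies $\ell(v)>0$.
\item That edge ends at a vertex $e_Z$ with $Z\in\mathcal C$. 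By (b), $|Z|\le2$ and $x\in Z$.
\item Hence $Z=\{x\}$ or $Z=\{x,y\}$, and $y\in Y$ is not even needed. So $Z=X\mathbin\Delta\{x,y\}$ with $y$ possibly equal to $x$.
\end{itemize}
This gives symmetric exchange in a form slightly stronger than required, which is fine.

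\emph{(a)$\Rightarrow$(b).} I would argue by contradiction. Suppose $[e_X,e_Y]$ is an edge with $|X\mathbin\Delta Y|\ge3$, and again normalize $X=\emptyset$. Choose a linear functional $c$ with $c\cdot e_Z\ge0$ on $P$ and equality exactly on this edge. Then $c_Y=0$, and every other $Z\in\mathcal C$ has $c_Z>0$. The plan is to exhibit points of $P$ that break this, by induction on $|Y|$ using exchange.
\begin{itemize}
\item Pick $x\in Y$ minimizing $c_x$. Apply exchange to $(\emptyset,Y,x)$ to get $\{x\}$ or $\{x,y\}$ in $\mathcal C$ with $y\in Y$.
\item Apply exchange to $(Y,\emptyset,\cdot)$ to get $Y\setminus\{x'\}$ or $Y\setminus\{x',y'\}$ in $\mathcal C$.
\item The target is a relation such as $e_{\{x,y\}}+e_{Y\setminus\{x,y\}}=e_\emptyset+e_Y$. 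This puts a point of the edge inside the convex hull of other vertices, contradicting that the segment is a face.
\end{itemize}

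This last direction is the main obstacle, because the two exchanges need not complement each other. The cleaner route I would follow instead is Bouchet's twisting-plus-greedy argument. Delta-matroids are exactly the set systems on which the greedy algorithm maximizes every linear functional $c$ over $\{-1,0,1\}$-signed orderings. From greedy optimality, one derives that for any $c$ the optimal face is again a delta-matroid polytope, via the minors $\mathcal C\setminus i$ and $\mathcal C/i$ together with \Cref{proposition:polytope_contr_dele}. In particular, each edge is the polytope of a two-element delta-matroid $\{X,Y\}$. Exchange applied inside $\{X,Y\}$ forces $X\mathbin\Delta\{x,y\}\in\{X,Y\}$, so $|X\mathbin\Delta Y|\le2$. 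This yields (b) and closes the loop.
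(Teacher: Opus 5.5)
The paper does not prove this proposition; it only cites Bouchet. Your (b)$\Leftrightarrow$(c) argument is fine, but each implication between (a) and (b) has a gap.

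\textbf{(b)$\Rightarrow$(a).} Here the logic is reversed. Symmetric exchange requires $y\in X\mathbin\Delta Y$, so producing a $y$ with no such constraint is a \emph{weaker} conclusion, not a stronger one. The edge you find from $\mathbf 0$ may well end at $e_{\{x,y\}}$ with $y\notin Y$. This is easily repaired. After twisting, run your tangent-cone argument inside the face $G=P\cap\{z: z_i=0 \text{ for all } i\notin Y\}$, which is a face because $z_i\ge 0$ on $P$. It contains $\mathbf 0$ and $e_Y$, and its edges are edges of $P$. So the edge you obtain ends at some $e_Z$ with $x\in Z\subseteq Y$ and $|Z|\le 2$, giving $y\in Y=X\mathbin\Delta Y$.

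\textbf{(a)$\Rightarrow$(b).} This is the substantive gap. Your closing step about two-element delta-matroids is correct, but it rests on the assertion that every face of $P$ is a delta-matroid polytope. That does not follow ``via the minors''. Contraction and deletion, with \Cref{proposition:polytope_contr_dele}, only reach the coordinate faces $P\cap\{x_i=0\}$, $P\cap\{x_i=1\}$ and their iterates. An edge is in general exposed by some other functional: for instance $\conv\{e_1,e_2\}$ in the triangle $P_{\{\emptyset,\{1\},\{2\}\}}$ is exposed by $x_1+x_2$ and is not a coordinate face. Face-stability in full generality is \Cref{proposition:faces_stable}, which is an immediate corollary of the present proposition, since edges of a face are edges of $P$. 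Invoking it here is therefore circular unless you prove it independently, and the greedy characterization is much heavier than needed.

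The route you abandoned works once you stop looking for a complementary pair and use inequalities instead. Take $c$ as you did, and suppose $|Y|\ge 3$. Since $e_Y$ lies on the edge, $\sum_{i\in Y}c_i=0$. Let $m$ and $M$ be the minimum and maximum of the $c_i$ for $i\in Y$, attained at $x$ and $x'$; then $m\le 0\le M$.
\begin{itemize}
\item Exchange for $(\emptyset,Y,x)$ yields $\{x\}$ or $\{x,y\}$ with $y\in Y$. Neither equals $\emptyset$ or $Y$, so its $c$-value is positive. This excludes $\{x\}$, since $c_x=m\le 0$, and gives $m+M\ge c_x+c_y>0$.
\item Exchange for $(Y,\emptyset,x')$ yields $Y\setminus\{x'\}$ or $Y\setminus\{x',y'\}$ with $y'\in Y$. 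Their $c$-values are $-M$ and $-M-c_{y'}$, and the relevant one must also be positive. This excludes the former and gives $M+m\le M+c_{y'}<0$.
\end{itemize}
The two bullets contradict each other, so $|Y|\le 2$. This proves (b) without any appeal to greedy optimality or face-stability.
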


The \emph{even} delta-matroids, whose feasible sets have the same parity, correspond exactly to
the $0/1$ polytopes with type~$D$ edge directions $e_i\pm e_j$. Thus every result below includes
the type~$D$ case.

\begin{proposition}[{\cite{Bouchet_1987}}]\label{proposition:faces_stable}
    Whenever they are nonempty, contraction and deletion of a delta-matroid are delta-matroids.
    More generally, every nonempty face of a delta-matroid polytope is a delta-matroid polytope.
\end{proposition}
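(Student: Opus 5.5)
The first claim (contraction and deletion) I will deduce from the second (faces), so the plan is to prove the face statement first. By \Cref{proposition:polytope_contr_dele}, $\{0\}\times P_{\mathcal D\setminus 1}=P_{\mathcal D}\cap\{x_1=0\}$. Since $P_{\mathcal D}\subseteq[0,1]^n$, the hyperplane $\{x_1=0\}$ is a supporting hyperplane, so this intersection is a face of $P_{\mathcal D}$; the same holds for $\{x_1=1\}$ and the contraction. Once faces are known to be delta-matroid polytopes, $\{0\}\times P_{\mathcal D\setminus1}$ is one, provided it is nonempty. Dropping a constant coordinate preserves both the edge directions and the $0/1$ property. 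Hence $P_{\mathcal D\setminus1}$, and likewise $P_{\mathcal D/1}$, is a delta-matroid polytope. Because $P_{\mathcal C}$ determines $\mathcal C$ for $0/1$ polytopes (its vertices are exactly the points $e_X$), the collection $\mathcal D\setminus1$ (respectively $\mathcal D/1$) is a delta-matroid. For a general index $i$ I use the same argument with the coordinate $x_i$.

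For the face statement, let $F$ be a nonempty face of $P=P_{\mathcal D}$. I will use the edge characterization in \Cref{proposition:char_poly_delta_mat}:
\begin{itemize}
\item $F$ is a $0/1$ polytope, since its vertices are vertices of $P$, hence of the form $e_X$;
\item every edge of $F$ is an edge of $P$, because faces of faces are faces.
\end{itemize}
Edges of $P$ are parallel to type~$B$ roots, so the same holds for $F$. By \Cref{proposition:char_poly_delta_mat}, $F=P_{\mathcal D'}$ for a delta-matroid $\mathcal D'$, namely $\mathcal D'=\{X:e_X\in F\}$.

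The only real content lies in \Cref{proposition:char_poly_delta_mat}, which we are allowed to assume. Beyond that, the step needing care is the bookkeeping that identifies a face with $P_{\mathcal C'}$ for the correct collection. This relies on the fact that every $e_X$ is a vertex of the cube and hence of any $0/1$ polytope containing it, so that $\vertt(P_{\mathcal C})=\{e_X: X\in\mathcal C\}$. The nonemptiness hypothesis is needed because $P_\emptyset=\emptyset$ is not a delta-matroid polytope.

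As a sanity check independent of polytopes, deletion can also be verified directly from symmetric exchange. Take $X,Y\in\mathcal D\setminus1$; then $1\notin X\Delta Y$, so any $y$ chosen in $X\Delta Y$ differs from $1$, and $X\Delta\{x,y\}$ avoids $1$. The analogous check works for contraction.
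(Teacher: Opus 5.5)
Your argument is correct. There is no internal proof to compare it with: the paper quotes this statement from Bouchet, exactly as it quotes \Cref{proposition:char_poly_delta_mat}.

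Your route makes the face statement a formal consequence of that edge characterization, since vertices and edges of a face are vertices and edges of $P$. You then recover minors as the coordinate faces $P\cap\{x_i=0\}$ and $P\cap\{x_i=1\}$. The observation $\vertt(P_{\mathcal C})=\{e_X\mid X\in\mathcal C\}$ is exactly what lets you pass back from polytopes to collections.

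Your closing exchange-axiom check is by itself a complete proof of the first sentence and needs nothing cited. The polytope route adds the statement for \emph{arbitrary} faces, which is what the paper actually uses: in Step~1 of the proof of \Cref{theorem:main}, $F_0$ and $F_1$ are faces exposed by arbitrary affine functionals, not by coordinate hyperplanes.

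One caveat on logical status. A standard proof of the ``only if'' half of \Cref{proposition:char_poly_delta_mat} itself goes through the face statement. It shows that the $w$-maximizing feasible sets of a delta-matroid form a delta-matroid, and then notes that a delta-matroid with exactly two feasible sets $X,Y$ forces $|X\mathbin\Delta Y|\le 2$. So your deduction is sound relative to what the paper takes as given, but it is not an independent foundation. A proof from the exchange axiom alone would have to establish that maximizer property directly.
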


We now characterize type $B$ simplices.
\begin{proposition}[Delta-matroid simplices]\label{proposition:delta_matroid_simplex_volume}
Let $P$ be a delta-matroid polytope which is a simplex. Then
\[
    \nvol(P)\in\{1,2\}.
\]
Moreover, $\nvol(P)=2$ if and only if, up to a permutation of the coordinates, coordinate
reflections $x_i\mapsto 1-x_i$, and deletion of constant coordinates, $P$ is the simplex
\[
    \conv\{\mathbf{0},\,
           e_1+e_2,\,
           e_1+e_3,\,
           e_2+e_3\}.
\]
In particular, every other delta-matroid simplex is unimodular.
\end{proposition}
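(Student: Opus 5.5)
Reduce first. Coordinate reflections $x_i\mapsto 1-x_i$ and permutations are unimodular affine maps preserving the class of delta-matroid polytopes, and deleting constant coordinates does not change $\nvol$. So we may assume $P$ is full-dimensional in $\RR^d$ after deleting constant coordinates, and, after translating one vertex $v_0$ by reflecting the coordinates in its support, that $v_0=\mathbf 0$. Then $P=\conv\{\mathbf0,v_1,\dots,v_d\}$ with $v_i\in\{0,1\}^d$, and $\nvol(P)=|\det(v_1,\dots,v_d)|$.

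The key structural input is \Cref{proposition:char_poly_delta_mat}. In a simplex every pair of vertices spans an edge. Hence each $v_i=v_i-\mathbf 0$ is a $0/1$ type~$B$ root, so $v_i=e_a$ or $v_i=e_a+e_b$. Moreover, each difference $v_i-v_j$ has entries in $\{0,\pm1\}$ with at most two nonzero entries. Encode the configuration as a multigraph-with-loops $G$ on $[d]$: $v_i=e_a$ is a half-edge (loop) at $a$, and $v_i=e_a+e_b$ is an edge $ab$. Full-dimensionality means the $d$ vectors are linearly independent. Standard incidence-matrix theory then says each connected component of $G$ has exactly as many edges as vertices and contains exactly one cycle (counting a half-edge as a cycle), and $|\det|=\prod_{\text{components}}c$. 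Here $c=1$ for a half-edge or even cycle... but an even cycle gives determinant $0$, so it is excluded. Thus $c=1$ for components containing a half-edge and $c=2$ for components whose cycle is odd. Hence $\nvol(P)$ is a power of two, and the task is to show there is at most one odd-cycle component and that it is a triangle with no further structure.

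Now use the edge constraint on differences. Two disjoint edges $e_a+e_b$ and $e_c+e_d$ have a difference with four nonzero entries, which is forbidden. So any two edges of $G$ share a vertex, and similarly every half-edge $e_a$ and every edge $e_b+e_c$ must satisfy $a\in\{b,c\}$. A pairwise-intersecting edge set is either a star or a triangle. If an odd cycle occurs, it is a triangle $\{12,13,23\}$, since a star has no cycles. Every other vector must then meet all three triangle edges, which is impossible for a distinct edge or half-edge; so $d=3$ and $P$ is exactly $T$. This gives $\nvol\le 2$, with equality iff $P\cong T$. Otherwise there is no odd cycle, each component contains a half-edge, and $\nvol=1$.

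The delicate step is the reduction to the vertex $\mathbf0$ and the claim that each $v_i$ is a root. The flip must map $v_0$ to $\mathbf 0$ while keeping the polytope a delta-matroid polytope. It does, since reflections preserve edge directions up to sign. The edges from $\mathbf0$ then force $v_i\in\{e_a,e_a+e_b\}$, because $e_a-e_b$ has a negative entry. I would also check that the triangle case genuinely has no constant coordinates after reduction, which holds since $d=3=\dim T$.
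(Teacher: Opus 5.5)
\textbf{Gap: the reduction to a full-dimensional simplex is false.} The step that fails is ``we may assume $P$ is full-dimensional in $\RR^d$ after deleting constant coordinates.'' Deleting constant coordinates does not make a simplex full-dimensional. Take the even delta-matroid $\{\emptyset,\{a,b_1\},\ldots,\{a,b_d\}\}$, whose polytope is $\conv\{\mathbf 0,e_a+e_{b_1},\ldots,e_a+e_{b_d}\}$; already for $d=1$ this is the segment $\conv\{\mathbf 0,e_1+e_2\}$. It is a $d$-dimensional delta-matroid simplex with $d+1$ nonconstant coordinates.

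This breaks two steps of your argument:
\begin{itemize}
\item The matrix with columns $v_1,\ldots,v_d$ is $(d+1)\times d$. So $\nvol(P)$ is not $|\det(v_1,\ldots,v_d)|$. It is the index of $\ZZ\langle v_1,\ldots,v_d\rangle$ in $\lin(v_1,\ldots,v_d)\cap\ZZ^{d+1}$, i.e.\ the gcd of the maximal minors.
\item Your component count (``as many edges as vertices'') does not apply. Here $G$ is a tree with $d+1$ vertices, $d$ edges and no half-edge.
\end{itemize}
Consequently your final sentence, that in the non-triangle case every component contains a half-edge and $\nvol=1$, is false for this configuration, and your argument never shows it is unimodular. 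This is exactly the star subcase of Case~3 in the paper's proof, which the paper handles with a separate lattice computation.

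\textbf{Repair.} The conclusion is still true and the fix is short. Drop the full-dimensionality claim and work with the $m\times d$ matrix of vertex vectors, where $m$ is the number of nonconstant coordinates. Your intersection constraints, together with linear independence, leave only four configurations:
\begin{itemize}
\item all singletons;
\item one singleton $\{a\}$ together with edges $\{a,b_i\}$;
\item the triangle;
\item a pure star $\{a,b_1\},\ldots,\{a,b_d\}$.
\end{itemize}
The first three are square, with $|\det|$ equal to $1$, $1$ and $2$ respectively. For the pure star, deleting row $a$ leaves an identity minor, so the gcd of the maximal minors is $1$. Equivalently, as in the paper, the span is $\{x_a=\sum_j x_{b_j}\}$ and every integral point of it equals $\sum_j x_{b_j}(e_a+e_{b_j})$.

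With this case added your proof is complete. It then follows the paper's case split closely, with the incidence-matrix determinant formula replacing the paper's explicit column operations in the square cases.
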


\begin{proof}
Write $P=\conv\{v_0,v_1,\ldots,v_d\}\subseteq[0,1]^n$.
Choose the vertex $v_0=e_Z$. Reflecting the coordinates indexed by $Z$, namely applying the affine
lattice automorphism
\[
    x_i\longmapsto
    \begin{cases}
        1-x_i,& i\in Z,\\
        x_i,& i\notin Z,
    \end{cases}
\]
sends $v_0$ to $\mathbf{0}$. Its linear part is a diagonal matrix with entries in $\{\pm1\}$, so it
preserves the type~$B$ root system; hence, by
\Cref{proposition:char_poly_delta_mat}, it sends delta-matroid polytopes to delta-matroid polytopes.
It also preserves normalized volume. We may therefore assume that $v_0=\mathbf{0}$.

Write $v_i=e_{X_i}$ for $1\le i\le d$. Since $P$ is a simplex, every pair of its vertices spans an
edge. By \Cref{proposition:char_poly_delta_mat}, every such edge has length $1$ or $\sqrt2$. Thus
\[
    |X_i|\in\{1,2\}
    \qquad\text{and}\qquad
    |X_i\,\Delta\,X_j|\le2
    \quad (i\ne j).
\]
Moreover, the vectors $e_{X_1},\ldots,e_{X_d}$ are linearly independent because
$\mathbf{0},e_{X_1},\ldots,e_{X_d}$ are affinely independent.

We classify the possible family $\{X_1,\ldots,X_d\}$.

\smallskip
\emph{Case 1: at least two of the $X_i$ are singletons.}
Suppose $\{a\}$ and $\{b\}$ occur, with $a\ne b$. If a two-element set $X$ occurs, then
$|X\,\Delta\,\{a\}|\le2$ and $|X\,\Delta\,\{b\}|\le2$ force $a,b\in X$, hence
$X=\{a,b\}$. But then $e_X=e_a+e_b$,
contradicting the linear independence of the vertex vectors. Hence all $X_i$ are singletons.
After deleting unused coordinates and permuting the remaining ones, $P$ is the standard simplex,
so $\nvol(P)=1$.

\smallskip
\emph{Case 2: exactly one of the $X_i$ is a singleton.}
Let it be $\{a\}$. Every two-element set $X_i$ must contain $a$, since otherwise
$|X_i\,\Delta\,\{a\}|=3$. Thus, after relabeling,
\[
    X_1=\{a\},\qquad X_i=\{a,b_i\}\quad(2\le i\le d),
\]
with the $b_i$ distinct. In the coordinate lattice on $\{a,b_2,\ldots,b_d\}$, subtracting the first
column $e_a$ from every other vertex vector transforms
\[
    e_a,\ e_a+e_{b_2},\ldots,e_a+e_{b_d}
\]
into
\[
    e_a,\ e_{b_2},\ldots,e_{b_d}.
\]
This is a unimodular integer column operation, so the vertex vectors form a lattice basis of their
saturated span and $\nvol(P)=1$.

\smallskip
\emph{Case 3: all $X_i$ have cardinality two.}
Regard the $X_i$ as the edges of a simple graph $G$. The condition
$|X_i\,\Delta\,X_j|\le2$ says exactly that every two edges of $G$ intersect. If all edges share a
common vertex $a$, then $G$ is a star, say with edges $\{a,b_1\},\ldots,\{a,b_d\}$. Its vertex
vectors are $e_a+e_{b_1},\ldots,e_a+e_{b_d}$.
Their real span consists, on these coordinates, of the vectors satisfying
\[
    x_a=\sum_{j=1}^d x_{b_j}.
\]
Hence every integral vector in this span is uniquely the integer combination
$\sum_j x_{b_j}(e_a+e_{b_j})$. The generated lattice is therefore saturated, and again
$\nvol(P)=1$.

It remains to consider the case in which the edges of $G$ have no common vertex. Choose two
edges $\{a,b\}$ and $\{a,c\}$ sharing $a$. Since $a$ is not common to all edges, there is an edge
not containing $a$; in order to meet both chosen edges it must be $\{b,c\}$. Any further edge
meeting all three of $\{a,b\}, \{a,c\}$, and $\{b,c\}$
must itself be one of these three. Thus, after deleting unused coordinates, the family is exactly the triangle $\{a,b\}, \{a,c\}, \{b,c\}$.
After relabeling $a,b,c$ as $1,2,3$, the matrix of the three nonzero vertex vectors is
\[
    \begin{pmatrix}
        1&1&0\\
        1&0&1\\
        0&1&1
    \end{pmatrix},
\]
whose determinant has absolute value $2$. Hence $\nvol(P)=2$, and $P$ is precisely the simplex
of \Cref{figure:3-simplex_det_2}.

The converse is immediate from the determinant computation, and coordinate permutations,
coordinate reflections and deletion or insertion of constant coordinates preserve normalized
volume. This proves the classification.
\end{proof}

\begin{remark}
    A simplex in a triangulation of a larger delta-matroid polytope need not itself be a delta-matroid
    polytope, since its edges may be diagonals of the ambient polytope. Thus
    \Cref{proposition:delta_matroid_simplex_volume} does not restrict the simplices in
    \Cref{theorem:main} to volumes $1$ and $2$.
\end{remark}

\subsection{The symmetric resonance arrangement}

A \emph{hyperplane arrangement} in $\RR^n$ is a finite collection of hyperplanes; a \emph{flat} is
an intersection of some of them (the empty intersection being~$\RR^n$).

The \emph{symmetric resonance arrangement} $\mathcal B_n$ consists of the hyperplanes
\[H_{(X^+,X^-)}=\{x\in\RR^n\mid x_{X^+}=x_{X^-}\}\]
indexed by disjoint
$X^+,X^-\subseteq[n]$ with $X^+\cup X^-\ne\emptyset$.

All hyperplanes of $\mathcal B_n$ pass through the origin, so its flats are linear subspaces; taking
$X^-=\emptyset$ recovers the hyperplanes $\{x_X=0\}$ of the classical resonance arrangement, which
plays the same role in type~$A$~\cite{Backman_Liu_2025}.

An affine functional is \emph{$B$-generic} if it is nonconstant on every positive-dimensional flat
of $\mathcal B_n$. Such functionals exist because the nongeneric ones form a finite union of
proper linear subspaces. Moreover, $\ell$ is $B$-generic if and only if
$\ell(c+\,\cdot\,)$ is, since translation does not change its linear part.

We shall only need the following consequence of the inequality description of delta-matroid
polytopes due to Bouchet and Cunningham.

\begin{proposition}[\cite{Bouchet_Cunningham_1995}]\label{proposition:facet_normals}
    Every facet of a delta-matroid polytope $P\subseteq\RR^n$ has an outer normal of the form
    $e_{X^+}-e_{X^-}$ for some disjoint $X^+,X^-\subseteq[n]$. Equivalently, $P$ is cut out by
    inequalities $x_{X^+}-x_{X^-}\le c_{X^+,X^-}$ indexed by disjoint pairs $(X^+,X^-)$.
\end{proposition}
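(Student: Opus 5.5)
This statement is attributed to Bouchet and Cunningham, so my plan is to derive it from the edge characterization in \Cref{proposition:char_poly_delta_mat}. The idea is to use the normal fan of $P$. For a polytope whose edges are all parallel to type~$B$ roots, every wall of the normal fan lies in a hyperplane orthogonal to a root $e_i$ or $e_i\pm e_j$. The normal fan is then a coarsening of the type~$B$ Coxeter fan, which is cut out by the hyperplanes $x_i=0$ and $x_i=\pm x_j$. An outer facet normal spans a ray of the normal fan. Every ray of the normal fan of a coarsening is a ray of the finer fan, so each ray lies on a ray of the $B_n$ Coxeter fan.

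The first step is therefore to justify the coarsening. Take a generic $w$. The vertex $v$ of $P$ maximizing $w$ is determined by the signs of $\langle w,r\rangle$ for the roots $r$ along the edges at $v$. So on each open Weyl chamber, where all $\langle w,r\rangle$ keep a fixed sign, the maximizing vertex is constant. This is the standard argument for generalized permutohedra, and it shows that every maximal cone of the normal fan is a union of closed Weyl chambers.

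The second step is to list the rays of the $B_n$ Coxeter fan. Its chambers are images under signed permutations of $\{x_{\sigma(1)}\ge\cdots\ge x_{\sigma(n)}\ge0\}$. The rays of the fundamental chamber are $e_1+\cdots+e_k$ for $1\le k\le n$. Applying signed permutations gives exactly the vectors $e_{X^+}-e_{X^-}$ with $X^+\cap X^-=\emptyset$ and $X^+\cup X^-\neq\emptyset$. If $P$ is not full-dimensional, its facet normals are defined only modulo the lineality space $\dir(P)^\perp$. The same argument still applies, since the normal fan's rays modulo lineality lie on faces of the Coxeter fan, and a representative of the required form can be chosen because the fan is complete and simplicial.

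The final step is the equivalence. Each facet inequality can be written as $\langle e_{X^+}-e_{X^-},x\rangle\le c$, which is $x_{X^+}-x_{X^-}\le c$. If $P$ is not full-dimensional, one adds the equations for $\aff(P)$, written as pairs of inequalities. Since $\aff(P)$ is an intersection of facets of the cube, these have the same form: coordinates equal to constants.

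I expect the main obstacle to be the lower-dimensional case. There one must check that a facet of $P$ relative to $\aff(P)$ has some outer normal of the required shape, not just one modulo lineality. The first thing I would try is to note that the normal cone of the facet is a union of faces of the Coxeter fan of dimension $n-\dim P+1$. It therefore contains a cone of the Coxeter fan that is not inside the lineality space, and such a cone has a ray $e_{X^+}-e_{X^-}$ that is not in $\dir(P)^\perp$. That ray is a valid outer normal for the facet.
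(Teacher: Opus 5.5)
Your route differs from the paper's. The paper does not prove this statement; it imports it from Bouchet--Cunningham's description of delta-matroid polytopes by bisubmodular inequalities. You instead derive it from the edge characterization \Cref{proposition:char_poly_delta_mat}: you show that the normal fan coarsens the $B_n$ Coxeter fan and then read off the rays $e_{X^+}-e_{X^-}$. This is the generalized-permutohedron viewpoint, and it is self-contained given the edge characterization. Your first two steps are sound. Your handling of relative facets in the lower-dimensional case is also correct: a Coxeter cone inside the facet's normal cone but not inside $\dir(P)^\perp$ has a ray outside $\dir(P)^\perp$, and any such vector exposes exactly that facet.

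There is, however, a false step in your final paragraph: $\aff(P)$ is \emph{not} in general an intersection of facets of the cube. For the even delta-matroid $\{\emptyset,\{1,2\}\}$ on $[2]$, the polytope is the segment $\conv\{\mathbf 0,e_1+e_2\}$ with $\aff(P)=\{x_1=x_2\}$. Likewise every matroid base polytope lies in a hyperplane $x_{[n]}=r$. This matters because the ``equivalently'' formulation needs the equations of $\aff(P)$ to be of signed-subset form as well. \Cref{lemma:key_1} uses exactly this to conclude that $\dir(F)$ is a flat of $\mathcal B_n$.

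Your own machinery repairs it. $\dir(P)^\perp$ is the minimal cone of the normal fan, so by your first step it is a union of cones of the complete simplicial Coxeter fan. Hence it is spanned by the Coxeter rays $e_{X^+}-e_{X^-}$ it contains. Each such ray gives an equation $x_{X^+}-x_{X^-}=c$ on $P$, that is, the pair $x_{X^+}-x_{X^-}\le c$ and $x_{X^-}-x_{X^+}\le -c$.

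Cleaner still, let $h_P(w)=\max_{p\in P}\langle w,p\rangle$. Each closed chamber lies in the normal cone of a single vertex, so $h_P$ is linear on it. Therefore any $x$ satisfying $x_{X^+}-x_{X^-}\le h_P(e_{X^+}-e_{X^-})$ for all disjoint pairs satisfies $\langle w,x\rangle\le h_P(w)$ for every $w$. So these inequalities cut out $P$ in every dimension at once. The facet statement, including what you called the main obstacle, then follows from the standard fact that every facet is defined by some non-implicit inequality of any $H$-description.
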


\begin{lemma}\label{lemma:key_1}
    Let $P\subseteq\RR^n$ be a delta-matroid polytope and $F$ a nonempty face of $P$. Then $\dir(F)$
    is a flat of the symmetric resonance arrangement $\mathcal B_n$.
\end{lemma}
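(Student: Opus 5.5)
The plan is to describe $\dir(F)$ as an intersection of hyperplanes coming from facet normals. A nonempty face $F$ of $P$ is the intersection of $P$ with the supporting hyperplanes of the facets containing it. If $F=P$, we still need to handle the case where $P$ is not full-dimensional; we treat that separately below.

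First suppose $P$ is full-dimensional. Let $F$ be a proper face, and let $\mathcal F_1,\dots,\mathcal F_k$ be the facets containing $F$. By \Cref{proposition:facet_normals}, facet $\mathcal F_j$ is defined by $x_{X_j^+}-x_{X_j^-}=c_j$ with $X_j^+,X_j^-$ disjoint. The standard face-lattice fact for polytopes gives
\[
\aff(F)=\bigcap_{j=1}^k \{x\mid x_{X_j^+}-x_{X_j^-}=c_j\},
\]
since a face of a full-dimensional polytope is the intersection of the facets containing it, and its affine hull is the intersection of their affine hulls. The direction space is then the corresponding linear intersection $\bigcap_j H_{(X_j^+,X_j^-)}$. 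The normal is nonzero, so $X_j^+\cup X_j^-\neq\emptyset$, and each $H_{(X_j^+,X_j^-)}$ is therefore a hyperplane of $\mathcal B_n$. If $F=P$, then $\dir(F)=\RR^n$, which is the empty intersection.

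Next, for general $P$ we reduce to the full-dimensional case. The main obstacle is that \Cref{proposition:facet_normals} only controls facet normals, whereas the equations defining $\aff(P)$ must also have the form $x_{X^+}=x_{X^-}$. I would prove directly that $\dir(P)$ is a flat of $\mathcal B_n$.

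By \Cref{proposition:char_poly_delta_mat}, $\dir(P)$ is spanned by the edge directions, which are type $B$ roots $e_i$ and $e_i\pm e_j$. Consider the graph on $[n]$ in which an edge direction $e_i\pm e_j$ joins $i$ and $j$, and an edge direction $e_i$ marks $i$. Then $\dir(P)$ is a sum over connected components. On a component containing a marked vertex, or an "odd" signed cycle (a cycle whose product of signs is unbalanced), the span is the full coordinate space of the component. Otherwise there is a signing $\sigma$ of the component $C$ with all roots orthogonal to $\sum_{i\in C}\sigma_i e_i$, and the span is exactly that hyperplane within $\RR^C$; this is the standard signed-graph cycle-space computation.

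So $\dir(P)$ is cut out by the following equations, each of the form $x_{X^+}=x_{X^-}$ with disjoint $X^\pm$, hence each a hyperplane of $\mathcal B_n$:
\begin{itemize}
\item $x_i=0$ for $i$ in no edge direction;
\item $x_{C^+}-x_{C^-}=0$ for each balanced unmarked component $C$.
\end{itemize}

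Alternatively, one can avoid the signed-graph argument. After coordinate reflections, which permute $\mathcal B_n$ up to the sign swaps allowed in $(X^+,X^-)$, and after deleting constant coordinates, one can treat $P$ inside its affine hull. One would then apply Bouchet–Cunningham's full inequality description, which gives both the equalities and the facets as tight inequalities $x_{X^+}-x_{X^-}\le c$. In that description the implicit equalities are pairs of opposite inequalities of the allowed form, which is enough.

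Finally, combine the two steps. For a general face $F$, note that $F$ is itself a delta-matroid polytope by \Cref{proposition:faces_stable}. Applying the argument of the previous paragraph to $F$ directly shows that $\dir(F)$ is a flat of $\mathcal B_n$. This makes the facet argument optional: the edge-direction argument alone suffices, and it is where I expect the real care to be needed.
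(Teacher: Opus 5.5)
Your proof is correct, but it works from edge directions where the paper works from facet normals.

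\textbf{The paper's route.} It writes $P$ by the complete signed-subset inequality system of \Cref{proposition:facet_normals}; when $P$ is lower-dimensional, the equations of $\aff(P)$ are included as pairs of opposite inequalities. It then observes that $\aff(F)$ is cut out by the inequalities tight on $F$, and translates each tight hyperplane to some $H_{(X^+,X^-)}$.

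\textbf{Your route.} You use that $\dir(F)$ is spanned by the edge vectors of $F$, since the graph of $F$ is connected. These edges are edges of $P$, hence parallel to elements of $B^+$ by \Cref{proposition:char_poly_delta_mat}. You then show that the linear span of any set of type~$B$ roots is a flat of $\mathcal B_n$, using the associated signed graph:
\begin{compactenum}
\item a balanced component $C$ with switching $\sigma$ contributes the hyperplane $x_{C^+}=x_{C^-}$;
\item a coordinate touched by no root contributes $x_i=0$;
\item a component with a half-edge or an unbalanced cycle imposes nothing.
\end{compactenum}
Equality follows from the linear independence of a spanning tree of roots.

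\textbf{Comparison.} Your argument is rigorous and bypasses Bouchet--Cunningham. It also avoids the implicit-equality bookkeeping for lower-dimensional $P$ that you rightly flagged. It describes the flat explicitly, and it proves more: the conclusion holds for every lattice polytope with type~$B$ edge directions, for example every integral type~$B$ generalized permutohedron. The paper's route is shorter given the cited $H$-description. It is also the direct analogue of the Backman--Liu argument with facet normals and the resonance arrangement.

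\textbf{Unneeded parts.} Your full-dimensional facet paragraph is not needed. Your ``alternatively'' paragraph is essentially the paper's proof, though deleting constant coordinates does not in general make $P$ full-dimensional. \Cref{proposition:faces_stable} is also unnecessary, since the edges of $F$ are already edges of $P$.
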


\begin{proof}
    Write $P=\{x\mid a_i\cdot x\le b_i,\ i\in I\}$ using the complete ambient
    signed-subset $H$-description of \Cref{proposition:facet_normals}; when $P$ is
    lower-dimensional, equations defining $\aff(P)$ are included as pairs of opposite
    inequalities. Thus each $a_i$ is of the form $e_{X_i^+}-e_{X_i^-}$ with
    $X_i^+,X_i^-$ disjoint. For a nonempty face $F$, setting
    $\Tight(F)=\{i\in I\mid a_i\cdot x=b_i \text{ for all }x\in F\}$, one has
    $\aff(F)=\bigcap_{i\in\Tight(F)}\{x\mid a_i\cdot x=b_i\}$; see~\cite[Section~2]{Ziegler_1995}.
    Translating by any point of $F$ turns each of these hyperplanes into
    $\{x_{X_i^+}-x_{X_i^-}=0\}=H_{(X_i^+,X_i^-)}\in\mathcal B_n$, so $\dir(F)$ is an intersection of
    hyperplanes of $\mathcal B_n$, i.e.\ a flat.
\end{proof}

\subsection{Regular subdivisions}

We use the standard terminology of~\cite[Chapter~2]{DeLoera_Rambau_Santos_2010}. A
\emph{subdivision} $\mathcal S$ of $P$ is a face-to-face polyhedral complex with union $P$; it is a
\emph{triangulation} if its cells are simplices. For a finite
$A\subseteq P\cap\ZZ^n$ with $\conv(A)=P$, a subdivision with vertices in $A$ is \emph{regular} if
some weight $f\colon A\to\RR$ makes its cells the projections of the lower faces of
$\conv\{(v,f(v))\mid v\in A\}$. Equivalently, $S$ is a cell when an affine function $h$ satisfies
$h=f$ on $A\cap S$ and $h<f$ on $A\setminus S$; we then say that $f$ \emph{induces} the
subdivision.

We use freely the following two standard facts, for which we refer
to~\cite[Sections~2.3 and~2.5]{DeLoera_Rambau_Santos_2010}: the restriction of a regular subdivision of
$P$ to a face of $P$ is the regular subdivision of that face induced by the restricted function;
and if $f$ induces $\mathcal S$ and $h\colon A\to\RR$ is arbitrary, then for all sufficiently small
$\varepsilon>0$ the function $f+\varepsilon h$ induces a subdivision refining $\mathcal S$ whose
restriction to each cell $S\in\mathcal S$ is the subdivision induced by $h|_{S\cap A}$. In
particular, the resulting subdivision does not depend on $\varepsilon$ once $\varepsilon$ is small
enough, as one sees from the existence of the secondary fan.

We shall use the following uniform version when the refining function itself depends on the
perturbation parameter.

\begin{lemma}[Uniform small perturbations]\label{lemma:uniform_perturbation}
Let $A\subseteq\RR^n$ be a finite point configuration in convex position (every point of $A$ is a
vertex of $\conv(A)$), let
$f\colon A\to\RR$ induce a regular subdivision $\mathcal S$, and let
$\mathcal H\subseteq\RR^A$ be bounded. There exists
$\delta_0>0$ such that, for every $h\in\mathcal H$ and every $0<\delta<\delta_0$, the function
$f+\delta h$ induces a refinement of $\mathcal S$. On each cell $S\in\mathcal S$, the restriction
of this refinement is the regular subdivision induced by $h|_{A\cap S}$.
\end{lemma}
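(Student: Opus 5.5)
The plan is to work directly with the lifted polytope and the finitely many candidate cells, and not to go through the secondary fan. Since $A$ is in convex position, every subset $S\subseteq A$ with $\conv(S)$ a face is a candidate. The structure I would use is a reformulation of regularity in terms of strict inequalities: a subset $C\subseteq A$ spans a cell of the subdivision induced by a weight $g$ if and only if there is an affine $h$ with $h=g$ on $C$ and $h<g$ on $A\setminus C$. For the unperturbed $f$ and a cell $S\in\mathcal S$, fix such an affine $h_S$. Since $A$ is finite, the gap
\[
\gamma\coloneqq\min_{S\in\mathcal S}\ \min_{a\in A\setminus S}\bigl(f(a)-h_S(a)\bigr)
\]
is strictly positive. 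Let $M\coloneqq\sup_{h\in\mathcal H}\max_{a\in A}|h(a)|<\infty$, which is finite because $\mathcal H$ is bounded.

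\emph{Step 1 (refinement).} Let $C$ be a cell of the subdivision induced by $f+\delta h$, witnessed by an affine $\ell$. I would show that $C$ lies in some cell of $\mathcal S$, uniformly in $h$, once $\delta<\delta_0$. The points of $C$ are lowest points of the lifted configuration with heights $f+\delta h$. Perturbing heights by at most $\delta M$ moves the lower hull by at most $\delta M$. More concretely, every point of $C$ lies within $2\delta M$ of the lower hull of $f$, in the sense that $f(a)-\tilde\ell(a)\le 2\delta M$ for a suitable supporting affine function $\tilde\ell$ of $f$.

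To turn this into a clean statement I would argue by contradiction with compactness. Suppose there are sequences $\delta_k\to0$ and $h_k\in\mathcal H$ such that some cell $C_k$ is not contained in any cell of $\mathcal S$. Passing to a subsequence, we may take $C_k=C$ fixed (there are finitely many subsets) and $h_k\to h_\infty$ in the closure of $\mathcal H$. Normalize the witnesses $\ell_k$; this is where convex position is used, because it lets the affine functions be bounded on $\conv(A)$ once they agree with bounded heights on a spanning set. A limit $\ell_\infty$ then satisfies $\ell_\infty=f$ on $C$ and $\ell_\infty\le f$ on $A$. Hence $C$ lies in the face cut out by $\ell_\infty$, which is a union of points of a cell of $\mathcal S$ (a face of the lower hull). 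This is a contradiction. I expect this compactness and normalization step to be the main obstacle. The care needed is that $\conv(C)$ could be lower-dimensional, so the bound on $\ell_k$ should be obtained on $\aff(C)$ only, or handled by choosing $\ell_k$ minimal in a suitable norm. Along the way one has to check that the contact face of $\ell_\infty$ is contained in a cell of $\mathcal S$. This holds because every face of the lower hull is a union of cells, and the maximal cell containing it gives the required inclusion.

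\emph{Step 2 (restriction).} Once refinement is known, fix $S\in\mathcal S$ and write $g=f+\delta h$. On $A\cap S$ we have $f=h_S$, an affine function, so $g|_{A\cap S}=h_S+\delta h$. Subtracting an affine function and scaling by $\delta>0$ do not change the induced subdivision. Hence the restriction is induced by $h|_{A\cap S}$, provided that the cells of the refinement contained in $S$ are exactly the cells induced on $A\cap S$ by $g|_{A\cap S}$.

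For this, take a cell $C\subseteq S$ witnessed by $\ell$ for $g$ on all of $A$; it is also witnessed on $A\cap S$. Conversely, let $C$ be a cell of $g|_{A\cap S}$ witnessed by $\ell$. Then $\ell+\lambda(h_S-f)$, extended suitably, works for all of $A$ once $\delta M$ is small relative to $\gamma$. The reason is that points outside $S$ sit at least $\gamma$ above $h_S$, while the perturbation is of order $\delta M$ and $\ell-h_S$ is of order $\delta M$ on $\conv(A)$, again by the normalization of Step 1. So I would take $\delta_0$ of the form $\gamma/(cM)$ with $c$ depending only on $A$. This also gives the uniformity over $\mathcal H$ directly, and could replace the compactness argument in Step 1 if the constants are made explicit.
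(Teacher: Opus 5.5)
Your route differs from the paper's. The paper argues in the secondary fan. The class of $f$ lies in the relative interior of the cone of $\mathcal S$. Finiteness of the fan gives a neighborhood of $f$ covered by cones having that cone as a face, i.e.\ by refinements. Boundedness of $\mathcal H$ puts every $f+\delta h$ into that neighborhood, and the restriction statement follows because $f|_{A\cap S}$ is affine. Your lifting-and-compactness argument rederives this local structure by hand, which is fine in principle. As written, however, it has a gap exactly at the step you flag.

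In Step~1, the equalities $\ell_k=f+\delta_kh_k$ on $C$ control $\ell_k$ only on $\aff(C)$. If $C$ does not affinely span $\aff(A)$, the $\ell_k$ may diverge to $-\infty$ at points of $A$ off $\aff(C)$, and then there is no affine limit. Neither ``bounding on $\aff(C)$ only'' nor an unspecified minimal-norm choice produces an $\ell_\infty$ with $\ell_\infty\le f$ on all of $A$. Convex position is not what helps here; affine spanning is.

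The missing observation is that it suffices to treat maximal cells, since every cell of the perturbed subdivision is a face of a maximal one. A maximal cell $C$ affinely spans $\aff(A)$, so $\ell_k|_{\aff(A)}$ is determined by the bounded values $(f+\delta_kh_k)|_C$. A subsequence then converges to an affine $\ell_\infty$ with $\ell_\infty=f$ on $C$ and $\ell_\infty\le f$ on $A$. The set $\{a\in A:\ell_\infty(a)=f(a)\}$ is, by definition, a cell of $\mathcal S$ containing $C$, which gives the contradiction.

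Step~2 does not work as written either. The function $\ell+\lambda(h_S-f)$ is not affine, since $f$ is only a weight on $A$. The claimed bound $\ell-h_S=O(\delta M)$ on $\conv(A)$ also fails whenever $C$ does not span $\aff(A)$, which always happens when $S$ is a lower-dimensional cell of $\mathcal S$.

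You do not need this converse direction at all. Once refinement is known, the cells of the perturbed subdivision lying in $S$ cover $S$. Indeed, a point $x\in S$ lies in some maximal cell $T$, which lies in some $S''\in\mathcal S$, and $T\cap S=T\cap(S\cap S'')$ is a face of $T$ containing $x$. Each such cell is a cell of the subdivision of $S$ induced by $(f+\delta h)|_{A\cap S}=h_S+\delta h$, by restricting its witness. A subdivision of $S$ all of whose cells belong to another subdivision of $S$ coincides with it. This is also the fact the paper's proof uses implicitly in its last sentence.

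With these two repairs, your argument is a valid and more elementary alternative to the secondary-fan proof. It obtains the uniform $\delta_0$ without invoking the fan.
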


\begin{proof}
Work in the space of weights on $A$ modulo restrictions of affine functions. The weights inducing
$\mathcal S$ form the relative interior of a cone $\sigma$ of the secondary fan, and refinements of
$\mathcal S$ correspond to cones containing $\sigma$ as a face. Since the secondary fan is finite
and the class of $f$ lies in $\relint(\sigma)$, a neighborhood of the class of $f$
is contained in the union of those cones. The image of the bounded set $\mathcal H$ is bounded, so
$f+\delta h$ lies in this neighborhood for all $h\in\mathcal H$ once $\delta>0$ is sufficiently
small. Hence the induced subdivision refines $\mathcal S$.

For a cell $S\in\mathcal S$, the restriction $f|_{A\cap S}$ agrees with an affine function.
Consequently $(f+\delta h)|_{A\cap S}$ differs from $\delta h|_{A\cap S}$ by an affine function,
and the two weights induce the same subdivision of~$S$.
\end{proof}

\section{Dyadic systems, simplices and triangulations}\label{section:dyadic}

\subsection{Dyadic numbers and totally dyadic systems}

The ring $\DD$ of \emph{dyadic numbers} is the subring $\ZZ[1/2]$ of $\QQ$ of elements $p/2^k$ with
$p\in\ZZ$ and $k\in\NN$.

\begin{definition}
    Let $V\subseteq\RR^n$ be a rational subspace. A set $D$ of integer vectors of $V$ is a
    \emph{dyadic generating set} of $\Lambda_V=V\cap\ZZ^n$ if every $v\in\Lambda_V$ is a
    \emph{dyadic combination} of $D$, that is, $v=\sum_{d\in D}\alpha_d d$ with all
    $\alpha_d\in\DD$. A dyadic generating set which is moreover a basis of $V$ is a \emph{dyadic
    basis} of $\Lambda_V$.
\end{definition}

\begin{definition}[Totally dyadic system]
    A set $D$ of integer vectors of $\RR^n$ is a \emph{totally dyadic system} if every subset
    $E\subseteq D$ is a dyadic generating set of $\lin(E)\cap\ZZ^n$.
\end{definition}

\begin{lemma}\label{lemma:det_power_2_TDS}
    A set $D$ of integer vectors is a totally dyadic system if and only if, for every linearly
    independent subset $E\subseteq D$, the greatest common divisor of the maximal (i.e.\
    $|E|\times|E|$) minors of the matrix with columns $E$ is a power of two.
\end{lemma}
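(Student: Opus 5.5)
The plan is to reduce both directions to a single statement about a linearly independent set $E=\{d_1,\ldots,d_k\}$, namely
\[
[\Lambda_V:\ZZ\langle E\rangle]\ \text{is a power of two}\iff\gcd(\text{maximal minors of }M_E)\ \text{is a power of two},
\]
where $V=\lin(E)$ and $M_E$ is the $n\times k$ matrix with columns $E$.

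\textbf{Step 1: the minor identity.} Since $\Lambda_V$ is saturated, it has a basis $b_1,\ldots,b_k$ that extends to a basis of $\ZZ^n$. After a unimodular change of coordinates in $\ZZ^n$, we may take $\Lambda_V=\ZZ^k\times\{0\}$. Then $M_E=\binom{C}{0}$ with $C\in\ZZ^{k\times k}$ and $|\det C|=[\Lambda_V:\ZZ\langle E\rangle]$. A left multiplication by $U\in\GL_n(\ZZ)$ changes the $k\times k$ minors by an invertible integer transformation (Cauchy--Binet), so their gcd is unchanged. In the new coordinates the only nonzero maximal minor is $\det C$. Hence the gcd of the maximal minors equals $[\Lambda_V:\ZZ\langle E\rangle]$. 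This is the standard fact, and I would cite it or give exactly this argument.

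\textbf{Step 2: dyadic generation for independent $E$.} For $E$ linearly independent, $E$ dyadically generates $\Lambda_V$ iff the finite group $\Lambda_V/\ZZ\langle E\rangle$ is a $2$-group.
\begin{itemize}
\item If the index $N$ is $2^m$, then $2^m v\in\ZZ\langle E\rangle$ for every $v\in\Lambda_V$, so $v$ is a dyadic combination of $E$.
\item Conversely, suppose every $v=\sum\alpha_d d$ with $\alpha_d\in\DD$. Clearing denominators, every element of the quotient has order a power of two, so the quotient is a $2$-group and $N$ is a power of two.
\end{itemize}
Combined with Step 1, this gives the forward implication of the lemma directly, by restricting the totally dyadic hypothesis to independent subsets.

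\textbf{Step 3: the converse for arbitrary subsets.} Let $E\subseteq D$ be any subset and choose a maximal linearly independent subset $E'\subseteq E$. Then $\lin(E')=\lin(E)=V$. By hypothesis and Steps 1--2, $E'$ dyadically generates $\Lambda_V$, and therefore so does the larger set $E$. This also covers $E=\emptyset$ trivially.

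The only place needing care is Step 1. The main obstacle is justifying that the gcd of maximal minors is invariant under unimodular row operations. I would handle it via Cauchy--Binet: the minors of $UM$ are integer combinations of those of $M$, and conversely using $U^{-1}$, so the two gcds divide each other.
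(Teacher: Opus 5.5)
Your proof is correct and follows essentially the same route as the paper: identify the gcd of the maximal minors with the index $[\Lambda_V:\ZZ\langle E\rangle]$, note that $E$ dyadically generates $\Lambda_V$ exactly when this index is a power of two, and handle arbitrary subsets by passing to a maximal linearly independent subset. The only difference is that you prove the minor identity directly, by extending a basis of the saturated lattice to a basis of $\ZZ^n$ and using Cauchy--Binet, whereas the paper cites Smith normal form for the torsion of $\coker(M)$.
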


\begin{proof}
    Fix a linearly independent $E=\{d_1,\ldots,d_r\}\subseteq D$ and let $M$ be the matrix with
    columns $d_1,\ldots,d_r$. The saturation $\lin(E)\cap\ZZ^n$ contains the column lattice
    $\ZZ\langle E\rangle$ with finite index, and this index is the order of the torsion subgroup of
    $\coker(M)$. By Smith normal form it is the greatest common divisor of the $r\times r$ minors of
    $M$. Hence $E$ dyadically generates $\lin(E)\cap\ZZ^n$ if and only if this index is a power of
    two.

    If $D$ is a totally dyadic system, this applies to every independent subset. Conversely, assume
    the stated gcd condition for every linearly independent subset of $D$, and let $E\subseteq D$
    be arbitrary. Choose a maximal linearly independent subset $E_0\subseteq E$. Then
    $\lin(E_0)=\lin(E)$, and the hypothesis shows that $E_0$ dyadically generates
    $\lin(E)\cap\ZZ^n$. Since $E_0\subseteq E$, the set $E$ does as well. Thus $D$ is totally dyadic.
\end{proof}

We record the totally dyadic system relevant to type~$B$. Let
\[
    B^+\;\coloneqq\;\{e_i\}_{i\in[n]}\;\cup\;\{e_i-e_j\}_{1\le i<j\le n}\;\cup\;\{e_i+e_j\}_{1\le
    i<j\le n}
\]
be a set of representatives of the edge directions of delta-matroid polytopes, one per line spanned
by a type~$B$ root.

\begin{lemma}\label{lemma:type_B_TDS}
    The set $B^+$ is a totally dyadic system.
\end{lemma}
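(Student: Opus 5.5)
The plan is to reduce, via \Cref{lemma:det_power_2_TDS}, to a statement about square minors. I will show that every square submatrix of the $n\times|B^+|$ matrix whose columns are the vectors of $B^+$ has determinant $0$ or $\pm2^k$ for some $k\ge0$. Granting this, let $E\subseteq B^+$ be linearly independent. The matrix with columns $E$ has at least one nonzero $|E|\times|E|$ minor. Every nonzero such minor is $\pm$ a power of two, so their greatest common divisor is a power of two, and \Cref{lemma:det_power_2_TDS} gives that $B^+$ is totally dyadic.

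The key structural fact is that every vector of $B^+$ has at most two nonzero entries, each in $\{\pm1\}$. The same holds for every column of any submatrix obtained by selecting rows and columns. I will prove by induction on the size $k$ that a $k\times k$ matrix $M$ with this column property has $\det M\in\{0\}\cup\{\pm2^j\}$. The base case $k=1$ is immediate, since $\det M\in\{0,\pm1\}$.

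For the inductive step I will split into cases as follows.
\begin{compactenum}
\item If some column or row of $M$ is zero, then $\det M=0$.
\item If some column has exactly one nonzero entry, I expand along that column. This gives $\pm$ the determinant of a $(k-1)\times(k-1)$ submatrix, which still has the column property, so induction applies.
\item If some row has exactly one nonzero entry, I expand along that row. Deleting a column and a row preserves the column property, so again induction applies.
\item Otherwise every column has exactly two nonzero entries and every row has at least two. Counting the nonzero entries in two ways gives $2k$ from the columns and at least $2k$ from the rows. Hence every row also has exactly two nonzero entries.
\end{compactenum}

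In the remaining case I view rows as vertices and columns as edges of a $2$-regular multigraph, which is a disjoint union of cycles; a double edge counts as a $2$-cycle. After permuting rows and columns, $M$ becomes block diagonal, with one block per cycle. It therefore suffices to show that each block, corresponding to a cycle of length $m$, has determinant in $\{0,\pm2\}$.

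For one block, I order the vertices and edges around the cycle, so that the block has nonzero entries only on the diagonal and on the cyclic superdiagonal. Multiplying rows and columns by $\pm1$ changes only the sign of the determinant. Using this freedom, I will make all diagonal entries and all but one superdiagonal entry equal to $1$, with the last entry equal to $\sigma\in\{\pm1\}$. The Leibniz expansion then has only two nonzero terms: the identity permutation and the $m$-cycle. This gives determinant $1\pm\sigma\in\{0,\pm2\}$. The case $m=2$ is the $2\times2$ computation, which gives the same result.

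I expect this cycle normalization to be the only step needing care; in particular the sign bookkeeping must be done correctly so that the conclusion is exactly $0$ or $\pm2$. The other steps are routine. The product of the blocks is then $0$ or $\pm2^{c}$, where $c$ is the number of cycles, which completes the induction.
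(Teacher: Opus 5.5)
Your proposal is correct and follows the paper's proof essentially step for step. Both reduce via \Cref{lemma:det_power_2_TDS} to showing every square minor is $0$ or $\pm2^k$, expand along rows or columns with at most one nonzero entry, and decompose the remaining $2$-regular case into cycle blocks of determinant $0$ or $\pm2$. Your sign normalization of a cycle block simply spells out the paper's one-line ``expansion gives $0$ or $\pm2$'' step.
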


\begin{proof}
    It is enough to show that every square minor of the matrix with columns $B^+$ is either zero or
    a signed power of two. Consider an $r\times r$ submatrix $M$. Every column of $M$ has at most two
    nonzero entries, each equal to $\pm1$. If a row or column has at most one nonzero entry, expansion
    along it reduces the determinant, up to sign, to a smaller such minor; induction applies.

    Otherwise every column has exactly two nonzero entries and every row has at least two. Since
    the square matrix has exactly $2r$ nonzero entries, every row has exactly two as well.
    Interpreting the columns as signed edges on the selected rows, the corresponding $2$-regular
    multigraph is a disjoint union of cycles.
    After permuting rows and columns, $M$ is block diagonal with one block per cycle. For a cycle
    block, expansion gives determinant either $0$ or $\pm2$. Thus $\det(M)$ is $0$ or $\pm2^k$ for
    some $k$. The conclusion follows from \Cref{lemma:det_power_2_TDS}.
\end{proof}

\begin{remark}[Gcd condition versus individual minors]\label{remark:equimodular}
    The gcd criterion of \Cref{lemma:det_power_2_TDS} is strictly weaker than requiring every minor
    to be $0$ or $\pm$ a power of two: it constrains only the gcd of the maximal minors of each
    independent subset. The proof of \Cref{lemma:type_B_TDS} shows that $B^+$ does satisfy the
    stronger property. We only use the gcd criterion below.
\end{remark}

\subsection{Dyadic simplices}

\begin{definition}[Dyadic simplices and triangulations]
    A lattice simplex is \emph{dyadic} if its normalized volume is a power of two. A lattice
    triangulation $\mathcal T$ is \emph{dyadic} if every maximal simplex is dyadic; we denote its
    set of maximal simplices by $\mathcal T_{\max}$. By \Cref{lemma:dyadic_faces}, equivalently
    every simplex of $\mathcal T$ is dyadic.
\end{definition}

The next lemma is the bridge between the volume definition and the lattice statements that drive
the induction; it is the type~$B$ counterpart of the characterization of unimodular simplices used
in~\cite[Definition~2.10]{Backman_Liu_2025}.

\begin{lemma}\label{lemma:dyadic_bridge}
    Let $T=\conv(v_0,\ldots,v_d)$ be a lattice simplex and $\Lambda=\dir(T)\cap\ZZ^n$ the saturated
    lattice of its direction space. The following are equivalent.
    \begin{compactenum}
        \item $T$ is dyadic.
        \item The edge vectors $\{v_i-v_0\mid 1\le i\le d\}$ form a dyadic basis of $\Lambda$.
        \item For some, equivalently every, vertex $v_k$, the edge vectors $\{v_i-v_k\mid i\ne k\}$
              form a dyadic basis of $\Lambda$.
        \item For some, equivalently every, spanning tree $\tau$ of the complete graph on
              $\{0,\ldots,d\}$, the $d$ edge vectors $\{v_i-v_j\mid ij\in\tau\}$ form a dyadic basis
              of $\Lambda$.
    \end{compactenum}
\end{lemma}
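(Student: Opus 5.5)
The plan is to reduce everything to one index computation. Let $M$ be the $n\times d$ matrix with columns $v_i-v_0$. Its columns are linearly independent and span $\dir(T)$, so $\ZZ\langle v_1-v_0,\ldots,v_d-v_0\rangle$ is a full-rank sublattice of $\Lambda$.

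First I will recall the standard identity
\[
\nvol(T)=[\Lambda:\ZZ\langle v_i-v_0\rangle].
\]
To see it, choose a $\ZZ$-basis of $\Lambda$ and write the edge vectors in this basis as a $d\times d$ integer matrix $N$. Then $d!\,\mathrm{vol}_\Lambda(T)=|\det N|$, and by Smith normal form this equals the index. As in the proof of \Cref{lemma:det_power_2_TDS}, the index also equals the gcd of the maximal minors of $M$, but the direct form is enough here.

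\textbf{(1)$\Leftrightarrow$(2).} Let $L=\ZZ\langle v_i-v_0\rangle$ and $m=[\Lambda:L]$. Then $m\Lambda\subseteq L$. Conversely, if $2^k\Lambda\subseteq L$, then $m$ divides $2^{kd}$, since $\Lambda/L$ is a finite group of order $m$ killed by $2^k$, so each elementary divisor divides $2^k$.

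A basis of $\Lambda\otimes\QQ$ is dyadic generating exactly when every $v\in\Lambda$ has dyadic coordinates. This holds if and only if $\Lambda/L$ is a $2$-group, that is, if and only if $m$ is a power of two. Indeed, if $m=2^a$ then $2^a v\in L$ gives dyadic coordinates. Conversely, if some element of $\Lambda/L$ had odd order $q>1$, its coordinates would have denominator divisible by $q$, so they could not be dyadic. Combined with the volume identity, this gives (1)$\Leftrightarrow$(2).

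\textbf{(3) and (4).} For any vertex $v_k$ and any spanning tree $\tau$, I claim the lattices $\ZZ\langle v_i-v_k\rangle$ and $\ZZ\langle v_i-v_j : ij\in\tau\rangle$ both coincide with $L=\ZZ\langle v_i-v_j : \text{all } i,j\rangle$.

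\begin{compactenum}
\item Each $v_i-v_j$ is the telescoping sum of the tree edges along the unique path from $j$ to $i$ in $\tau$.
\item Each tree edge $v_i-v_j$ lies in the lattice spanned by any other generating family, for example $v_i-v_j=(v_i-v_k)-(v_j-v_k)$.
\end{compactenum}

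Each of these families has exactly $d$ vectors spanning $\dir(T)$, so each is a basis of it. Their common lattice $L$ is independent of the choice, so the index $[\Lambda:L]$ is the same for all of them. Hence "some" and "every" agree in (3) and (4), and both are equivalent to (2). The star at $v_k$ is itself a spanning tree, so (3) is a special case of (4).

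\textbf{Main obstacle.} The only delicate step is the volume–index identity with respect to the relative lattice $\dir(T)\cap\ZZ^n$ rather than $\ZZ^n$. I would handle it by choosing a $\ZZ$-basis of the saturated lattice $\Lambda$. This identifies $\dir(T)$ with $\RR^d$ and $\Lambda$ with $\ZZ^d$, and the identity then reduces to the full-dimensional determinant formula.
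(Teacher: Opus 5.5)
Your proposal is correct and follows essentially the paper's proof: you identify $\nvol(T)$ with the index $[\Lambda:\ZZ\langle v_i-v_0\rangle]$, characterize dyadic generation as this finite quotient being a $2$-group, and observe that every star and every spanning-tree family generates the same lattice of all differences $v_i-v_j$, which is the paper's $\GL_d(\ZZ)$ change-of-variables argument. The odd-order step could be phrased more cleanly by using an element of odd prime order (Cauchy's theorem), but this is cosmetic.
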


\begin{proof}
    The normalized volume of $T$ relative to $\Lambda$ equals the index
    $[\Lambda:\ZZ\langle v_i-v_0\rangle_{i\ge1}]$, that is, the order of the finite abelian group
    $Q\coloneqq\Lambda/\ZZ\langle v_i-v_0\rangle_{i\ge1}$. The edge vectors dyadically generate
    $\Lambda$ if and only if every element of $Q$ is annihilated by a power of two, i.e.\ if and
    only if $Q$ is a $2$-group, i.e.\ if and only if $|Q|$ is a power of two. This proves
    $(1)\Leftrightarrow(2)$; and since the normalized volume is intrinsic to $(T,\Lambda)$ and does
    not depend on the chosen apex, $(2)\Leftrightarrow(3)$.

    For $(3)\Rightarrow(4)$, let $\tau$ be a spanning tree of the complete graph on
    $\{0,\ldots,d\}$. Each $\tau$-edge vector $v_i-v_j$ equals $(v_i-v_0)-(v_j-v_0)$, so the
    $\tau$-edge vectors are obtained from the $v_0$-star by an integer linear change of variables;
    conversely, since $\tau$ is a spanning tree, each $v_i-v_0$ is the signed sum of the
    $\tau$-edges along the unique $v_i$--$v_0$ path in $\tau$, so this change of variables lies in
    $\GL_d(\ZZ)$. It therefore does not change the generated lattice, hence not the quotient
    $Q$ either. Condition (3) is the special case of (4) where $\tau$ is a star, giving
    $(4)\Rightarrow(3)$.
\end{proof}

\begin{lemma}[Faces of dyadic simplices]\label{lemma:dyadic_faces}
    Every face of a dyadic lattice simplex is dyadic.
\end{lemma}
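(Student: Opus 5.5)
Let $T=\conv(v_0,\ldots,v_d)$ be a dyadic lattice simplex and $F=\conv(v_0,\ldots,v_k)$ a face; after relabeling we may assume $v_0\in F$. The plan is to compare the two lattices through the quotient groups of \Cref{lemma:dyadic_bridge}. Put
\[
    \Lambda=\dir(T)\cap\ZZ^n,\qquad \Lambda_F=\dir(F)\cap\ZZ^n,\qquad L=\ZZ\langle v_i-v_0\rangle_{1\le i\le d},\qquad L_F=\ZZ\langle v_i-v_0\rangle_{1\le i\le k}.
\]
By \Cref{lemma:dyadic_bridge}, $T$ is dyadic exactly when $Q=\Lambda/L$ is a $2$-group, and $F$ is dyadic exactly when $Q_F=\Lambda_F/L_F$ is a $2$-group. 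So it suffices to embed $Q_F$ into $Q$, because a subgroup of a finite $2$-group is again a $2$-group.

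To get the embedding, take the map $Q_F\to Q$ induced by the inclusion $\Lambda_F\subseteq\Lambda$. We need to show it is injective, which amounts to $L\cap\Lambda_F=L_F$. Let $x=\sum_{i=1}^d c_i(v_i-v_0)$ with all $c_i\in\ZZ$, and suppose $x\in\Lambda_F\subseteq\dir(F)=\lin\{v_i-v_0\mid i\le k\}$. The vectors $v_1-v_0,\ldots,v_d-v_0$ are linearly independent, so the expansion of $x$ in them is unique. Since $x$ also lies in the span of the first $k$ of them, $c_i=0$ for $i>k$, hence $x\in L_F$. The reverse inclusion $L_F\subseteq L\cap\Lambda_F$ is immediate. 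Thus $Q_F\hookrightarrow Q$, so $|Q_F|$ divides the power of two $|Q|$ and is itself a power of two.

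Equivalently, in dyadic language: any $v\in\Lambda_F$ lies in $\Lambda$, so it is a dyadic combination of all the edge vectors. By uniqueness of coordinates in the basis $\{v_i-v_0\}$, only the first $k$ coefficients are nonzero, so $v$ is a dyadic combination of the edge vectors of $F$. Then \Cref{lemma:dyadic_bridge}(2) gives the claim.

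There is no real obstacle here. The one point that needs care is that $\Lambda_F$ is saturated in $\dir(F)$ rather than in $\dir(T)$, which is why we need $\Lambda_F=\Lambda\cap\dir(F)$. This holds simply because both sides equal $\dir(F)\cap\ZZ^n$.
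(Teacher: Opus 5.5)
Your proof is correct and follows the paper's argument essentially verbatim. You take the apex $v_0$ in $F$, use linear independence of the edge star to get $L\cap\dir(F)=L_F$, and conclude that $\Lambda_F/L_F$ injects into the finite $2$-group $\Lambda/L$, so \Cref{lemma:dyadic_bridge} shows $F$ is dyadic.
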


\begin{proof}
    Let $T$ be dyadic and let $F$ be a nonempty face. Choose a vertex $v_0$ of $F$ and write
    \[
        \Gamma_T=\ZZ\langle v-v_0\mid v\in\vertt(T)\setminus\{v_0\}\rangle,
        \qquad \Lambda=\dir(T)\cap\ZZ^n.
    \]
    By \Cref{lemma:dyadic_bridge}, $\Lambda/\Gamma_T$ is a finite $2$-group. Set
    $V_F=\dir(F)$, $\Lambda_F=V_F\cap\ZZ^n$, and
    $\Gamma_F=\ZZ\langle v-v_0\mid v\in\vertt(F)\setminus\{v_0\}\rangle$. Since the full edge star at
    $v_0$ is a real basis of $\dir(T)$, one has $\Gamma_T\cap V_F=\Gamma_F$. Therefore the inclusion
    $\Lambda_F\hookrightarrow\Lambda$ induces an injection
    $\Lambda_F/\Gamma_F\hookrightarrow\Lambda/\Gamma_T$. Hence $\Lambda_F/\Gamma_F$ is a finite $2$-group, so
    $\nvol(F)$ is a power of two by \Cref{lemma:dyadic_bridge}.
\end{proof}

Being dyadic is a genuine restriction, already among $0/1$ simplices.

\begin{example}\label{example:non_dyadic}
    In $\RR^4$, the $0/1$ simplex
    $T=\conv\{\mathbf{0},\,e_{\{1,2,3\}},\,e_{\{1,2,4\}},\,e_{\{1,3,4\}},\,e_{\{2,3,4\}}\}$ has
    normalized volume $|\det(J-R)|=3$, where $J$ is the all-ones $4\times 4$ matrix and $R$ the
    antidiagonal permutation matrix; so $T$ is not dyadic. Consistently with
    \Cref{theorem:main}, $T$ is not a delta-matroid polytope: its edge vector
    $e_{\{1,2,3\}}-\mathbf{0}$ is not a root of type~$B$, and indeed the symmetric exchange axiom
    fails for $X=\emptyset$, $Y=\{1,2,3\}$ and $x=1$.
\end{example}

\subsection{Dyadic complementarity}

Recall that two linear subspaces $V,W\subseteq\RR^n$ are \emph{independent} if $V\cap W=\{0\}$.

\begin{definition}
    Rational linear subspaces $V,W\subseteq\RR^n$ are \emph{dyadic-complementary} if there exist
    $E_V\subseteq V\cap\ZZ^n$ and $E_W\subseteq W\cap\ZZ^n$ which together dyadically generate
    $(V+W)\cap\ZZ^n$.
\end{definition}

The following reformulation is what makes the whole section work: for independent subspaces, dyadic
complementarity is a property of the pair $(V,W)$ alone, expressed by a single index, and it is
then automatically witnessed by \emph{every} choice of dyadic generating sets.

\begin{proposition}\label{proposition:dyadic_compl_index}
    Let $V,W\subseteq\RR^n$ be independent rational subspaces and write $\Lambda_V=V\cap\ZZ^n$,
    $\Lambda_W=W\cap\ZZ^n$ and $\Lambda=(V+W)\cap\ZZ^n$. The following are equivalent.
    \begin{compactenum}
        \item $V$ and $W$ are dyadic-complementary.
        \item The index $[\Lambda:\Lambda_V\oplus\Lambda_W]$ is a power of two.
        \item For \emph{all} dyadic generating sets $E_V$ of $\Lambda_V$ and $E_W$ of
              $\Lambda_W$, the union $E_V\cup E_W$ dyadically generates~$\Lambda$.
    \end{compactenum}
\end{proposition}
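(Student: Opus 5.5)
The plan is to reduce everything to a statement about the finite abelian group $Q\coloneqq\Lambda/(\Lambda_V\oplus\Lambda_W)$. The sum $\Lambda_V+\Lambda_W$ is direct because $V\cap W=\{0\}$. It has rank $\dim V+\dim W=\dim(V+W)=\operatorname{rank}\Lambda$, so $Q$ is finite. The key observation, used exactly as in the proof of \Cref{lemma:dyadic_bridge}, is that a subset $S\subseteq\Lambda$ dyadically generates $\Lambda$ if and only if $\Lambda/\ZZ\langle S\rangle$ is finite and annihilated by a power of two. Equivalently, for every $v\in\Lambda$ some $2^k v$ lies in $\ZZ\langle S\rangle$: clearing denominators in a dyadic combination gives this, and conversely one divides by $2^k$.

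I would first prove $(2)\Rightarrow(3)$. Let $E_V$ and $E_W$ be dyadic generating sets of $\Lambda_V$ and $\Lambda_W$. Take $v\in\Lambda$. Since $|Q|$ is a power of two, $2^a v=x+y$ with $x\in\Lambda_V$ and $y\in\Lambda_W$. By hypothesis, $x$ and $y$ are dyadic combinations of $E_V$ and $E_W$ respectively. Dividing by $2^a$ shows that $v$ is a dyadic combination of $E_V\cup E_W$.

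Next, $(3)\Rightarrow(1)$ is immediate once dyadic generating sets are known to exist. For example, $E_V=\Lambda_V$ itself, or any $\ZZ$-basis of $\Lambda_V$, serves, and similarly for $W$.

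The substantive direction is $(1)\Rightarrow(2)$. Suppose $E_V\subseteq\Lambda_V$ and $E_W\subseteq\Lambda_W$ together dyadically generate $\Lambda$. Given $v\in\Lambda$, write $v$ as a dyadic combination of $E_V\cup E_W$ and clear denominators. This gives $2^k v=x+y$, where $x\in\ZZ\langle E_V\rangle\subseteq\Lambda_V$ and $y\in\ZZ\langle E_W\rangle\subseteq\Lambda_W$. Hence every element of $Q$ is killed by a power of two. Since $Q$ is finite, it is a $2$-group, so its order $[\Lambda:\Lambda_V\oplus\Lambda_W]$ is a power of two (by Cauchy's theorem, or the structure theorem for finite abelian groups).

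The only points needing care are finiteness of $Q$, which is where independence and rationality of $V,W$ are used, and the fact that a finite group of exponent a power of two has order a power of two. I do not expect a genuine obstacle. The mild subtlety is remembering in $(1)\Rightarrow(2)$ that the witnesses $E_V,E_W$ need not themselves generate $\Lambda_V,\Lambda_W$ dyadically. This does not matter, because we only need their integer spans to lie inside $\Lambda_V$ and $\Lambda_W$.
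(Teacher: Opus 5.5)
Your proposal is correct and follows essentially the same route as the paper: the same three implications, with the finiteness of $\Lambda/(\Lambda_V\oplus\Lambda_W)$ coming from independence and a rank count. The only cosmetic difference is in $(1)\Rightarrow(2)$: the paper first bounds the exponent of the intermediate quotient $\Lambda/(\ZZ\langle E_V^0\rangle+\ZZ\langle E_W^0\rangle)$ and deduces $2^k\Lambda\subseteq\Lambda_V\oplus\Lambda_W$, whereas you argue elementwise in the finite quotient and then invoke Cauchy's theorem.
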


\begin{proof}
    Since $V$ and $W$ are independent, $\Lambda_V\oplus\Lambda_W$ is a sublattice of $\Lambda$ of
    full rank $\dim V+\dim W$, so the index in (2) is finite.

    $(1)\Rightarrow(2)$. Let $E_V^0\subseteq\Lambda_V$ and $E_W^0\subseteq\Lambda_W$ witness dyadic
    complementarity. Then every element of
    $\Lambda/(\ZZ\langle E_V^0\rangle+\ZZ\langle E_W^0\rangle)$
    has $2$-power order; being finitely generated, this group is a finite $2$-group, say of exponent
    $2^k$. Since $\ZZ\langle E_V^0\rangle+\ZZ\langle E_W^0\rangle$ is contained in
    $\Lambda_V\oplus\Lambda_W$, we get $2^k\Lambda\subseteq\Lambda_V\oplus\Lambda_W$, so
    $\Lambda/(\Lambda_V\oplus\Lambda_W)$ has $2$-power order.

    $(2)\Rightarrow(3)$. Let $2^k$ be the exponent of $\Lambda/(\Lambda_V\oplus\Lambda_W)$, let
    $E_V,E_W$ be dyadic generating sets of $\Lambda_V,\Lambda_W$, and let $x\in\Lambda$. Then
    $2^kx\in\Lambda_V\oplus\Lambda_W$; write $2^kx=x_V+x_W$ with $x_V\in\Lambda_V$,
    $x_W\in\Lambda_W$. Then $x_V$ is a $\DD$-combination of $E_V$ and $x_W$ one of $E_W$; dividing
    by $2^k$ exhibits $x$ as a $\DD$-combination of $E_V\cup E_W$.

    $(3)\Rightarrow(1)$. A $\ZZ$-basis of $\Lambda_V$ is in particular a dyadic generating set, and
    likewise for $\Lambda_W$; apply (3) to these.
\end{proof}

\begin{lemma}\label{lemma:dyadic_compl_subspace}
    Let $V,W\subseteq\RR^n$ be independent dyadic-complementary rational subspaces. If
    $V'\subseteq V$ and $W'\subseteq W$ are rational subspaces, then $V'$ and $W'$ are independent and
    dyadic-complementary.
\end{lemma}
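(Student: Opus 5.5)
Independence is immediate: $V'\cap W'\subseteq V\cap W=\{0\}$. For dyadic complementarity I will use criterion (2) of \Cref{proposition:dyadic_compl_index}. I set $\Lambda=(V+W)\cap\ZZ^n$ and $\Lambda'=(V'+W')\cap\ZZ^n$, write $\Lambda_{V'}=V'\cap\ZZ^n$, $\Lambda_{W'}=W'\cap\ZZ^n$, and aim to show that $[\Lambda':\Lambda_{V'}\oplus\Lambda_{W'}]$ is a power of two. As in the proof of $(1)\Rightarrow(2)$, it suffices to find $k$ with $2^k\Lambda'\subseteq\Lambda_{V'}\oplus\Lambda_{W'}$. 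The quotient is finite because $V'$ and $W'$ are independent, so the index is then a divisor of a power of two.

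By hypothesis and \Cref{proposition:dyadic_compl_index}, there is $k$ with $2^k\Lambda\subseteq\Lambda_V\oplus\Lambda_W$. Take $x\in\Lambda'$. Since $V'+W'\subseteq V+W$, we have $x\in\Lambda$, so $2^kx=x_V+x_W$ with $x_V\in\Lambda_V$ and $x_W\in\Lambda_W$.

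On the other hand, $x\in V'+W'$, so $2^kx=y+z$ with $y\in V'$ and $z\in W'$, a priori with only real coordinates. Because $V\cap W=\{0\}$, the decomposition of a vector of $V+W$ into a $V$-part and a $W$-part is unique. Since $y\in V'\subseteq V$ and $z\in W'\subseteq W$, this uniqueness gives $y=x_V$ and $z=x_W$. Hence $x_V\in V'\cap\ZZ^n=\Lambda_{V'}$ and $x_W\in\Lambda_{W'}$, so $2^kx\in\Lambda_{V'}\oplus\Lambda_{W'}$. This proves (2) for the pair $(V',W')$, and we conclude by \Cref{proposition:dyadic_compl_index}.

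The only delicate point is this uniqueness step. It is what makes the integral decomposition inherited from $(V,W)$ land in the smaller subspaces, and it uses independence essentially. Rationality of $V'$ and $W'$ is needed only so that the saturated lattices have full rank and the index is finite.
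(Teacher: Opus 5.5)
Your proposal is correct and follows the paper's proof essentially step for step. Like the paper, you take $k$ with $2^k\Lambda\subseteq\Lambda_V\oplus\Lambda_W$, decompose $2^kx$ along $V\oplus W$, and use uniqueness of that decomposition to place the integral components in $V'\cap\ZZ^n$ and $W'\cap\ZZ^n$, before concluding with criterion (2) of \Cref{proposition:dyadic_compl_index}.
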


\begin{proof}
    Independence is clear. Let $2^k$ be the exponent of $\Lambda/(\Lambda_V\oplus\Lambda_W)$, which
    is a $2$-group by \Cref{proposition:dyadic_compl_index}, so that
    $2^k\Lambda\subseteq\Lambda_V\oplus\Lambda_W$. Let
    $z\in(V'+W')\cap\ZZ^n\subseteq\Lambda$ and write $z=u+w$ with $u\in V'$ and $w\in W'$. Then
    $2^kz=2^ku+2^kw\in\Lambda_V\oplus\Lambda_W$, and uniqueness of the decomposition along
    $V\oplus W$ gives $2^ku\in V'\cap\ZZ^n$ and $2^kw\in W'\cap\ZZ^n$. Hence
    $2^k((V'+W')\cap\ZZ^n)\subseteq(V'\cap\ZZ^n)\oplus(W'\cap\ZZ^n)$. The corresponding quotient
    is therefore a finite $2$-group; conclude with \Cref{proposition:dyadic_compl_index}.
\end{proof}

\begin{lemma}\label{lemma:dyadic_compl_poly}
    Let $P$ and $Q$ be delta-matroid polytopes in $\RR^n$. Then $\dir(P)$ and $\dir(Q)$ are
    dyadic-complementary.
\end{lemma}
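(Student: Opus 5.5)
The plan is to exhibit the witnessing generating sets directly from the edges of $P$ and $Q$, and then let the total dyadicness of $B^+$ (\Cref{lemma:type_B_TDS}) do all the arithmetic work. Note that the definition of dyadic-complementarity does not require $\dir(P)$ and $\dir(Q)$ to be independent, so \Cref{proposition:dyadic_compl_index} is not needed here. What is needed is two sets $E_V\subseteq\dir(P)\cap\ZZ^n$ and $E_W\subseteq\dir(Q)\cap\ZZ^n$ whose union dyadically generates $(\dir(P)+\dir(Q))\cap\ZZ^n$.

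\emph{Step 1: the edges span the direction space.} For a lattice polytope $P$, $\dir(P)$ is spanned by the edge vectors of $P$. This follows from the connectivity of the graph of $P$: every vertex is joined to a fixed vertex $v$ by an edge path. Hence every difference $w-v$ of vertices is a sum of edge vectors, and these differences span $\dir(P)$.

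\emph{Step 2: the edge directions lie in $B^+$.} By \Cref{proposition:char_poly_delta_mat}, each edge vector of $P$ is a type~$B$ root, hence equal to $\pm b$ for some $b\in B^+$. Let $E_V\subseteq B^+$ be the set of representatives of the edge directions of $P$, and define $E_W$ in the same way for $Q$. Replacing a vector by its negative changes neither the linear span nor the set of dyadic combinations. By Step~1, $\lin(E_V)=\dir(P)$ and $\lin(E_W)=\dir(Q)$. Moreover $E_V\subseteq\dir(P)\cap\ZZ^n$ and $E_W\subseteq\dir(Q)\cap\ZZ^n$, since all vectors involved are integral.

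\emph{Step 3: apply total dyadicness.} The union $E\coloneqq E_V\cup E_W$ is a subset of $B^+$, and
\[
    \lin(E)=\lin(E_V)+\lin(E_W)=\dir(P)+\dir(Q).
\]
By \Cref{lemma:type_B_TDS}, $E$ is a dyadic generating set of $\lin(E)\cap\ZZ^n=(\dir(P)+\dir(Q))\cap\ZZ^n$. This is exactly the definition of dyadic-complementarity for $\dir(P)$ and $\dir(Q)$.

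There is no serious obstacle. The only point requiring care is Step~1: one must use edges rather than arbitrary vertex differences, since the latter need not be roots. Edges suffice because of the connectivity of the polytope graph. The remaining steps are immediate once \Cref{lemma:type_B_TDS} is in hand.
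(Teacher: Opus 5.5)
Your proof is correct and is essentially the paper's argument. You choose the edge directions of $P$ and $Q$ inside $B^+$ via \Cref{proposition:char_poly_delta_mat}, observe that they span $\dir(P)$ and $\dir(Q)$, and apply total dyadicness of $B^+$ (\Cref{lemma:type_B_TDS}) to their union; your Step~1 only makes explicit, via connectivity of the edge graph, the spanning fact that the paper uses without comment.
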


\begin{proof}
    Let $D_P$ and $D_Q$ be the sets of edge directions of $P$ and $Q$. By
    \Cref{proposition:char_poly_delta_mat} they may be chosen inside $B^+$, and they span $\dir(P)$
    and $\dir(Q)$ respectively. By \Cref{lemma:type_B_TDS} the set $B^+$ is a totally dyadic system,
    so $D_P\cup D_Q\subseteq B^+$ dyadically generates
    $\lin(D_P\cup D_Q)\cap\ZZ^n=(\dir(P)+\dir(Q))\cap\ZZ^n$, with
    $D_P\subseteq\dir(P)\cap\ZZ^n$ and $D_Q\subseteq\dir(Q)\cap\ZZ^n$. This is exactly dyadic
    complementarity.
\end{proof}

\begin{remark}\label{remark:embedding}
    All the notions above are compatible with the standard embedding
    $\RR^{n-1}\hookrightarrow\RR^n$, $y\mapsto(0,y)$: for a rational subspace $V\subseteq\RR^{n-1}$
    one has $(\{0\}\times V)\cap\ZZ^n=\{0\}\times(V\cap\ZZ^{n-1})$, so $V,W$ are
    dyadic-complementary in $\RR^{n-1}$ if and only if $\{0\}\times V$ and $\{0\}\times W$ are
    dyadic-complementary in $\RR^n$; and a lattice simplex of $\RR^{n-1}$ is dyadic if and only if its image is.
    We use this silently below.
\end{remark}

\subsection{The join lemma}

The following lemma isolates the geometric heart of the induction: dyadicness is preserved by
joining two dyadic simplices placed in consecutive integer slices, provided their direction spaces
are independent and dyadic-complementary.

\begin{lemma}[Join lemma]\label{lemma:dyadic_join}
    Let $T_0\subseteq\{x\in\RR^n\mid x_1=0\}$ and $T_1\subseteq\{x\in\RR^n\mid x_1=1\}$ be dyadic
    lattice simplices whose direction spaces $\dir(T_0)$ and $\dir(T_1)$ are independent and
    dyadic-complementary. Then $T\coloneqq\conv(T_0\cup T_1)$ is a dyadic lattice simplex with
    $\vertt(T)=\vertt(T_0)\sqcup\vertt(T_1)$.
\end{lemma}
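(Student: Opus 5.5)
Write $T_0=\conv(u_0,\ldots,u_a)$ and $T_1=\conv(w_0,\ldots,w_b)$, with $V=\dir(T_0)$ and $W=\dir(T_1)$, so $\dim V=a$ and $\dim W=b$. The plan has three steps. First show that the $a+b+2$ points are affinely independent, so that $T$ is a simplex with the stated vertex set. Then compute its edge star at $u_0$. Finally identify the relevant lattice quotient as a $2$-group and conclude with \Cref{lemma:dyadic_bridge}.

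For affine independence, use the vectors
\[
    u_i-u_0\ (1\le i\le a),\qquad w_0-u_0,\qquad w_j-w_0\ (1\le j\le b).
\]
They arise from the star $\{u_i-u_0\}\cup\{w_j-u_0\}$ by a unimodular integer change of variables, namely subtracting $w_0-u_0$ from each $w_j-u_0$. Suppose a linear relation holds among them. Taking the first coordinate, only $w_0-u_0$ has nonzero first coordinate, and that coordinate equals $1$; so its coefficient vanishes. What remains is a relation between a basis of $V$ and a basis of $W$. Independence of $V$ and $W$ then forces all coefficients to be zero. Hence $T$ is a simplex of dimension $a+b+1$, every point is a vertex, and the two vertex sets are disjoint because they lie in different slices.

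For dyadicness, set $U=\dir(T)=V\oplus W\oplus\RR t$ with $t=w_0-u_0$, and $\Lambda=U\cap\ZZ^n$. By \Cref{lemma:dyadic_bridge}(1)$\Leftrightarrow$(2), together with the unimodular change above, it suffices to show that $E_V\cup E_W\cup\{t\}$ dyadically generates $\Lambda$. Here $E_V=\{u_i-u_0\}$ and $E_W=\{w_j-w_0\}$ are dyadic bases of $\Lambda_V$ and $\Lambda_W$, again by \Cref{lemma:dyadic_bridge} applied to $T_0$ and $T_1$.

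Take $z\in\Lambda$. Since $t$ has first coordinate $1$ while $V+W\subseteq\{x_1=0\}$, the first coordinate of $z$ is an integer $c$, and $z=ct+y$ with $y\in V+W$. The vector $y=z-ct$ is integral, so $y\in(V+W)\cap\ZZ^n$. By \Cref{proposition:dyadic_compl_index}(3), $y$ is a $\DD$-combination of $E_V\cup E_W$. Hence $z$ is a $\DD$-combination of $E_V\cup E_W\cup\{t\}$, and $T$ is dyadic.

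The main obstacle is the splitting-off of $t$. It works only because the first coordinate gives an integral retraction onto $\ZZ t$, which is exactly what the consecutive-slice hypothesis provides. Everything else reduces to dyadic complementarity through \Cref{proposition:dyadic_compl_index}, which already holds for arbitrary dyadic generating sets.
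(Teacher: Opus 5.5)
Your proposal is correct and follows essentially the same route as the paper: the direct-sum decomposition $\dir(T)=\dir(T_0)\oplus\dir(T_1)\oplus\RR t$ obtained from the first coordinate, the integral splitting $z=z_1t+(z-z_1t)$, and \Cref{proposition:dyadic_compl_index}(3) applied to the $(V+W)$-part. Your explicit unimodular change of variables from the $u_0$-star to $E_V\cup\{t\}\cup E_W$ does the same job as the paper's appeal to the spanning-tree criterion \Cref{lemma:dyadic_bridge}(4).
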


\begin{proof}
    Fix base vertices $w_0\in\vertt(T_0)$ and $w_1\in\vertt(T_1)$, and set $v\coloneqq w_1-w_0$, an
    integer vector with first coordinate $1$. Since $\aff(T)$ is the affine hull of
    $\aff(T_0)\cup\aff(T_1)$, we have $\dir(T)=\dir(T_0)+\dir(T_1)+\RR v$. This sum is direct:
    if $u_0+u_1+\alpha v=0$ with $u_i\in\dir(T_i)$, then comparing first coordinates gives
    $\alpha=0$, since $\dir(T_0),\dir(T_1)\subseteq\{x_1=0\}$; then $u_0=u_1=0$ by independence.
    Hence
    \begin{equation}\label{equation:join_decomposition}
        \dir(T)=\dir(T_0)\oplus\dir(T_1)\oplus\RR v,
    \end{equation}
    so $\dim T=\dim T_0+\dim T_1+1$, whereas
    $|\vertt(T_0)|+|\vertt(T_1)|=\dim T_0+\dim T_1+2=\dim T+1$. Therefore
    $\vertt(T_0)\cup\vertt(T_1)$ is affinely independent and $T$ is a simplex with that vertex set.

    \emph{$T$ is dyadic.} Let $\beta_0=\{u-w_0\mid u\in\vertt(T_0)\setminus\{w_0\}\}$ and
    $\beta_1=\{u-w_1\mid u\in\vertt(T_1)\setminus\{w_1\}\}$ be the two edge stars. By
    \Cref{lemma:dyadic_bridge} applied to $T_0$ and $T_1$, the set $\beta_i$ is a dyadic basis of
    $\dir(T_i)\cap\ZZ^n$; since $\dir(T_0)$ and $\dir(T_1)$ are dyadic-complementary,
    \Cref{proposition:dyadic_compl_index}(3) shows that $\beta_0\cup\beta_1$ dyadically generates
    $(\dir(T_0)+\dir(T_1))\cap\ZZ^n$. Now let $x\in\dir(T)\cap\ZZ^n$ and decompose it according
    to~\eqref{equation:join_decomposition} as $x=u+\alpha v$ with
    $u\in\dir(T_0)\oplus\dir(T_1)\subseteq\{x_1=0\}$. Comparing first coordinates gives
    $\alpha=x_1\in\ZZ$, so $u=x-x_1v$ is an integer vector, hence a dyadic combination of
    $\beta_0\cup\beta_1$, and $x$ is a dyadic combination of $\beta\coloneqq\beta_0\cup\beta_1\cup\{v\}$.
    Finally $\beta$ consists of the star of $T_0$ at $w_0$, the star of $T_1$ at $w_1$, and the
    bridge $v=w_1-w_0$; these are the edges of a spanning tree of the complete graph on
    $\vertt(T)$, and $|\beta|=\dim T$. By \Cref{lemma:dyadic_bridge}(4), $T$ is dyadic.

\end{proof}

\section{Regular dyadic triangulations}\label{section:main}

\subsection{The main theorem}

Throughout this section, for each $j\in[n-1]$ fix a $B$-generic affine functional
$\ell_j\colon\RR^{\{j+1,\ldots,n\}}\to\RR$; the preceding genericity observation guarantees that
these exist. For $\varepsilon>0$, define
$\varphi^{(n)}\coloneqq 0$ and, downwards for $j=n-1,\ldots,1$,
\begin{equation}\label{equation:the_function}
    \varphi^{(j)}(x_j,\ldots,x_n)\;\coloneqq\;x_j\,\ell_j(x_{j+1},\ldots,x_n)
    \;+\;\varepsilon\,\varphi^{(j+1)}(x_{j+1},\ldots,x_n),
\end{equation}
and we set $\varphi\coloneqq\varphi^{(1)}=\sum_{j=1}^{n-1}\varepsilon^{\,j-1}
x_j\,\ell_j(x_{j+1},\ldots,x_n)$. Note that $\varphi$ is defined on all of $\RR^n$, not merely on
the vertex set of a given polytope; this uniformity is what will let us pass from delta-matroid
polytopes to arbitrary integral type~$B$ generalized permutohedra in
\Cref{corollary:typeB_GP}.

\begin{theorem}\label{theorem:main}
    Every delta-matroid polytope admits a regular dyadic triangulation. More precisely, after fixing
    the $B$-generic affine functionals $\ell_j$, there exists $\varepsilon_n>0$ such that for every
    $0<\varepsilon<\varepsilon_n$ the function $\varphi$ of~\eqref{equation:the_function} induces a
    dyadic triangulation of every delta-matroid polytope in $\RR^n$.
\end{theorem}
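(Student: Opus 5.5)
We argue by induction on $n$, following the Backman--Liu deletion--contraction scheme. For $n=1$ every delta-matroid polytope is a point or a unit segment, and $\varphi=0$ trivially induces the (unimodular) trivial triangulation. For the inductive step, let $P=P_{\mathcal D}\subseteq\RR^n$ and write $\varphi=x_1\ell_1(x_2,\ldots,x_n)+\varepsilon\,\varphi^{(2)}$. If $P$ lies in $\{x_1=0\}$ or $\{x_1=1\}$, then on $\vertt(P)$ the term $x_1\ell_1$ is affine, so $\varphi$ induces the same subdivision as $\varphi^{(2)}$ on the deletion or contraction. By \Cref{proposition:faces_stable} and \Cref{remark:embedding}, the inductive hypothesis applies with $\varepsilon<\varepsilon_{n-1}$.

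In the remaining case, where $P$ meets both slices, we first study the subdivision $\mathcal S$ induced by $f=x_1\ell_1$ on $A=\vertt(P)$. The points of $A$ lie in convex position, being $0/1$ vectors. On $A\cap\{x_1=0\}$ the function $f$ vanishes, and on $A\cap\{x_1=1\}$ it equals $\ell_1$. A cell of $\mathcal S$ is therefore determined by an affine $h$ with $h=f$ on the cell. We aim to show that every maximal cell has the form $S=\conv(F_0\cup F_1)$, where $F_0=\{0\}\times G_0$ and $F_1=\{1\}\times G_1$, with $G_0$ a face of $P_{\mathcal D\setminus1}+K$ and $G_1$ a face of $P_{\mathcal D/1}-K$ for a common cone/functional. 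Concretely, $G_0$ is the face of $P_{\mathcal D\setminus 1}$ maximizing some linear functional $-c$, and $G_1$ is the face of $P_{\mathcal D/1}$ minimizing $\ell_1+c$ (Cayley trick / mixed subdivision picture). Since $\ell_1$ is $B$-generic and both $\dir(G_0)$ and $\dir(G_1)$ are flats of $\mathcal B_{n-1}$ by \Cref{lemma:key_1}, any nonzero vector of $\dir(G_0)\cap\dir(G_1)$ would lie in a positive-dimensional flat. On that flat $\ell_1+c$ and $c$ are both constant, so $\ell_1$ is constant there, contradicting genericity. Hence $\dir(G_0)\cap\dir(G_1)=0$. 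Making this step rigorous is the main obstacle: the intersection of two arbitrary flats is again a flat of $\mathcal B_{n-1}$, which is the point of using the whole symmetric resonance arrangement. We must also check that the cells really are of this "join" shape, i.e.\ that every maximal cell meets both slices in faces of the slice polytopes. This follows from $P\cap\{x_1=i\}$ being faces by \Cref{proposition:polytope_contr_dele}.

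Next we refine $\mathcal S$ by $\varepsilon\varphi^{(2)}$. We apply \Cref{lemma:uniform_perturbation} with $\mathcal H$ the bounded family $\{\varphi^{(2)}|_A:0<\varepsilon<1\}$, uniformly over the finitely many delta-matroid polytopes in $\RR^n$. This gives $\varepsilon_n\le\varepsilon_{n-1}$ such that $\varphi$ refines $\mathcal S$, and on each cell $S$ the refinement is the one induced by $\varphi^{(2)}$, which depends only on $x_2,\ldots,x_n$. On $S=\conv(F_0\cup F_1)$ the restriction of $\varphi^{(2)}$ to $F_0$ and to $F_1$ induces, by induction (the $G_i$ are delta-matroid polytopes by \Cref{proposition:faces_stable}), dyadic triangulations $\mathcal T_0,\mathcal T_1$. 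We then claim that the subdivision of $S$ induced by a function on $F_0\sqcup F_1$ with independent directions is exactly the join $\{\conv(T_0\cup T_1)\}$ of those two triangulations. Because $\dir(F_0)\cap\dir(F_1)=0$, $S$ is combinatorially the join $F_0*F_1$, and lifting functions on a join decompose as sums of functions on the two factors, so the lower faces are joins of lower faces.

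Finally, each maximal simplex is $\conv(T_0\cup T_1)$ with $T_i\in(\mathcal T_i)_{\max}$ dyadic. Their direction spaces lie in $\dir(G_0)$ and $\dir(G_1)$, which are independent and dyadic-complementary by \Cref{lemma:dyadic_compl_poly}. By \Cref{lemma:dyadic_compl_subspace} the same holds for $\dir(T_0)$ and $\dir(T_1)$, so the join lemma \Cref{lemma:dyadic_join} shows that the simplex is dyadic. Regularity holds by construction, and the triangulation is induced by $\varphi$ for all $0<\varepsilon<\varepsilon_n$.
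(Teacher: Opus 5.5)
Your proposal is correct and follows the paper's proof essentially step for step. The shared steps are the coarse subdivision induced by $x_1\ell_1$, independence of $\dir(G_0)$ and $\dir(G_1)$ from $B$-genericity of $\ell_1$ on their intersection, the uniform perturbation lemma, and the join lemma combined with \Cref{lemma:dyadic_compl_poly} and \Cref{lemma:dyadic_compl_subspace}. The step you single out as the main obstacle is in fact immediate, since an intersection of flats of $\mathcal B_{n-1}$ is a flat by definition.

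The only real difference is in the last step. You identify the refinement of each coarse cell globally as the join of the two slice triangulations. This is valid because the two slice affine hulls are skew, so affine functions on the cell correspond to pairs of affine functions on the slices. The paper instead argues cell by cell: it splits each fine cell by first coordinate and reads off its two slices as cells of the triangulations that $\varphi^{(2)}$ induces on $F_0$ and $F_1$.
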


\begin{proof}
    We argue by induction on $n$. If $n=1$, a delta-matroid polytope is either a point or the unit
    segment $[0,1]$, and in both cases the trivial triangulation is regular and dyadic; take
    $\varepsilon_1=1$.

    Let $n\ge 2$. By the induction hypothesis, let $\varepsilon_{n-1}>0$ be a common threshold in
    dimension $n-1$, and for now assume $0<\varepsilon<\min\{1,\varepsilon_{n-1}\}$. Let
    $\mathcal D$ be a delta-matroid on $[n]$ and $P\coloneqq P_{\mathcal D}\subseteq\RR^n$ its
    polytope; recall $\vertt(P)=P\cap\ZZ^n\subseteq\{0,1\}^n$. Define
    $P_0,P_1\subseteq\RR^{n-1}$ by
    \[
        \{0\}\times P_0=P\cap\{x_1=0\},\qquad \{1\}\times P_1=P\cap\{x_1=1\}.
    \]
    By \Cref{proposition:polytope_contr_dele}, $P_0$ and $P_1$ are, when nonempty, the polytopes of
    the delta-matroids $\mathcal D\setminus 1$ and $\mathcal D/1$; either may be empty, which happens
    exactly when $1$ is a coloop or a loop.

    Write $y=(x_2,\ldots,x_n)$, $\ell\coloneqq\ell_1$ and $\varphi'\coloneqq\varphi^{(2)}$, so that
    $\varphi(x)=x_1\ell(y)+\varepsilon\varphi'(y)$. By the induction hypothesis, $\varphi'$ induces
    a dyadic triangulation of every delta-matroid polytope of $\RR^{n-1}$, in particular each
    nonempty one among $P_0$ and $P_1$.

    If $P_1=\emptyset$ then $\varphi$ restricted to $\vertt(P)$ equals $\varepsilon\varphi'$ and
    $P=\{0\}\times P_0$, so we are done; symmetrically if $P_0=\emptyset$, since
    then $\varphi=\ell(y)+\varepsilon\varphi'(y)$ on $\vertt(P)$ differs from
    $\varepsilon\varphi'$ by the affine function $\ell$, which does not change the induced
    subdivision. Assume from now on that $P_0,P_1\ne\emptyset$.

    \smallskip
    \emph{Step 1: the coarse subdivision.}
    Let $g(x)\coloneqq x_1\ell(y)$ and let $\mathcal S$ be the subdivision of $P$ induced by
    $g|_{\vertt(P)}$. Since $\vertt(P)\subseteq\{x_1=0\}\cup\{x_1=1\}$, the lifted polytope is
    \[
        \widehat P=\conv\Big(\big\{(0,y,0)\mid y\in\vertt(P_0)\big\}\cup
                             \big\{(1,y,\ell(y))\mid y\in\vertt(P_1)\big\}\Big).
    \]
    A lower face of $\widehat P$ is exposed by an affine functional, whose restriction to
    $\{x_1=0\}$ (resp.\ $\{x_1=1\}$) exposes a face of $P_0$ (resp.\ of $P_1$). Hence every cell of
    $\mathcal S$ is of the form
    \[
        S=\conv\big(\{0\}\times F_0\ \cup\ \{1\}\times F_1\big)
    \]
    with $F_0$ a face of $P_0$ and $F_1$ a face of $P_1$, possibly empty. By
    \Cref{proposition:faces_stable}, each nonempty $F_i$ is a delta-matroid polytope.

    \smallskip
    \emph{Step 2: independence, via genericity.}
    Fix a cell $S=\conv(\{0\}\times F_0\cup\{1\}\times F_1)$ of $\mathcal S$ with $F_0,F_1\ne\emptyset$,
    and set $L\coloneqq\dir(F_0)\cap\dir(F_1)$. By \Cref{lemma:key_1}, $\dir(F_0)$ and $\dir(F_1)$
    are flats of $\mathcal B_{n-1}$, hence so is $L$.

    Since $S$ is a cell of $\mathcal S$, there is an affine function $\widetilde g\colon\RR^n\to\RR$
    with $\widetilde g=g$ on $\vertt(P)\cap S$ and $\widetilde g<g$ on $\vertt(P)\setminus S$; write
    $\widetilde g(x_1,y)=bx_1+a\cdot y+c$. On $\vertt(F_0)$ we have $g=0$, so $a\cdot y+c=0$ there,
    hence on all of $\aff(F_0)$; in particular $a\cdot u=0$ for every $u\in\dir(F_0)$. On
    $\vertt(F_1)$ we have $g=\ell$, so $\ell(y)=a\cdot y+b+c$ there, hence on all of $\aff(F_1)$;
    in particular the linear part of $\ell$ agrees with $u\mapsto a\cdot u$ on $\dir(F_1)$.
    Therefore the linear part of $\ell$ vanishes on $L$, i.e.\ $\ell$ is constant on the flat $L$.
    As $\ell$ is $B$-generic, $L$ is not positive-dimensional, so $L=\{0\}$: the subspaces
    $\dir(F_0)$ and $\dir(F_1)$ are independent. By \Cref{lemma:dyadic_compl_poly} they are also
    dyadic-complementary, and by \Cref{remark:embedding} the same holds for
    $\dir(\{0\}\times F_0)$ and $\dir(\{1\}\times F_1)$ inside $\RR^n$.

    \smallskip
    \emph{Step 3: the refinement.}
    It remains to make the additional smallness condition on $\varepsilon$ uniform in $P$. For fixed
    $n$ there are only finitely many delta-matroids on $[n]$, hence finitely many vertex
    configurations and finitely many coarse subdivisions $\mathcal S$ occurring above. On these
    finite vertex sets the family of weights
    $\{\varphi'(\varepsilon):0<\varepsilon\le1\}$ is bounded. Applying
    \Cref{lemma:uniform_perturbation} to each of the finitely many configurations and taking the
    minimum of the resulting thresholds gives a common $\eta_n>0$ such that, for every such $P$ and
    every $0<\varepsilon<\eta_n$, the function
    $g+\varepsilon\varphi'(\varepsilon)$ refines the coarse subdivision and restricts on each coarse
    cell to the subdivision induced by $\varphi'(\varepsilon)$. Taking
    $\varepsilon_n=\min\{1,\varepsilon_{n-1},\eta_n\}$ makes both the induction hypothesis and the
    refinement statement valid simultaneously for every delta-matroid polytope in $\RR^n$.

    Let $0<\varepsilon<\varepsilon_n$ and let $\mathcal T$ be the subdivision induced by
    $\varphi=g+\varepsilon\varphi'$. Its traces on $\{0\}\times P_0$ and $\{1\}\times P_1$ are the
    dyadic triangulations supplied by the induction hypothesis.

    Let $T$ be a cell of $\mathcal T$, contained in a cell
    $S=\conv(\{0\}\times F_0\cup\{1\}\times F_1)$ of $\mathcal S$. Split the vertices of $T$ by
    their first coordinate and write
    \[
        T=\conv(\{0\}\times T_0\cup\{1\}\times T_1),
    \]
    where either $T_i$ may be empty. If $T_i$ is nonempty, it is a cell of the trace of
    $\mathcal T$ on $F_i$, hence a dyadic simplex. If one of $T_0,T_1$ is empty, then $T$ is simply
    a dyadic simplex in the other slice. If both are nonempty, then necessarily $F_0,F_1$ are
    nonempty; by Step~2 their direction spaces are independent and dyadic-complementary, and
    \Cref{lemma:dyadic_compl_subspace} gives the same properties for $\dir(T_0)$ and $\dir(T_1)$.
    The join lemma, \Cref{lemma:dyadic_join}, therefore shows that $T$ is dyadic.

    Thus every maximal cell of $\mathcal T$ is a dyadic simplex, and $\mathcal T$ is a regular dyadic triangulation of~$P$.
\end{proof}

\subsection{Integral type \texorpdfstring{$B$}{B} generalized permutohedra}

An \emph{integral type~$B$ generalized permutohedron} is a lattice polytope whose edge directions
are type~$B$ roots, equivalently whose normal fan coarsens the type~$B$ Coxeter fan
~\cite{Ardila_Castillo_Eur_Postnikov_2020}. Its $0/1$ members are exactly the delta-matroid
polytopes. Because $B_n$ and $C_n$ have the same root directions and Coxeter fan, this is also the
type~$C$ class when the coordinate lattice remains $\ZZ^n$.

We dice by the short-root hyperplanes $x_i=k$ for $i\in[n]$ and $k\in\ZZ$. This is the standard
lattice dicing of~\cite[Section~2.1.3]{Haase_Paffenholz_Piechnik_Santos_2021}: once its cells are
lattice polytopes, the quadratic weight $q(x)=\sum_i x_i^2$ induces the subdivision.

Haase--Paffenholz--Piechnik--Santos also use the same short-root dicing for polytopes whose
\emph{facet normals} lie in $B_n$~\cite[Section~3.1.2]{Haase_Paffenholz_Piechnik_Santos_2021}.
Our hypothesis is different: the \emph{edge directions} of $P$ lie in $B_n$. The precise
preservation result needed here is due to Eur, Fink, Larson and
Spink~\cite[Proposition~2.12]{Eur_Fink_Larson_Spink_2024}; they also point to earlier proofs
using bisubmodular functions.

\begin{lemma}[Coordinate dicing]\label{lemma:dicing}
    Let $P\subseteq\RR^n$ be an integral type~$B$ generalized permutohedron and
    $C_c\coloneqq c+[0,1]^n$ with $c\in\ZZ^n$. If $P\cap C_c\ne\emptyset$, then
    $P\cap C_c=c+P_{\mathcal D}$ for some delta-matroid $\mathcal D$ on $[n]$.
\end{lemma}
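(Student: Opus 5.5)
The plan is to show first that $Q\coloneqq P\cap C_c$ is a lattice polytope with vertices in $c+\{0,1\}^n$, and then that all its edges are parallel to type~$B$ roots. \Cref{proposition:char_poly_delta_mat} then identifies $Q-c$ with $P_{\mathcal D}$ for a delta-matroid $\mathcal D$. Translating by $-c$ preserves both integrality and edge directions, so we may assume $c=\mathbf 0$.

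\emph{Facet normals.} The normal fan of $P$ coarsens the type~$B$ Coxeter fan. Hence every facet normal of $P$ is a ray of that fan, and up to positive scaling it has the form $e_{X^+}-e_{X^-}$ with $X^+,X^-$ disjoint. This is the generalized-permutohedron analogue of \Cref{proposition:facet_normals}. Since $P$ is integral, its support function is integral on these rays. So $P=\{x: x_{X^+}-x_{X^-}\le b_{X^+,X^-}\}$ with integer right-hand sides $b$, given by a bisubmodular function. The cube $[0,1]^n$ is cut out by inequalities of the same signed-subset form, namely $x_i\le 1$ and $-x_i\le 0$, also with integer right-hand sides.

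\emph{Integrality and edges.} I expect this step to be the main obstacle. The natural route is the intersection theorem for integral bisubmodular polyhedra. The cube is itself an integral type~$B$ generalized permutohedron, and its defining function is bisubmodular. The intersection of two bisubmodular polyhedra is integral whenever both defining functions are integral; this is the analogue of Edmonds' polymatroid intersection theorem, proved via total dual integrality of the combined system. Having the intersection integral is not yet enough, since in general intersecting two generalized permutohedra does not preserve edge directions. The box case is special, however. A vertex $v$ of $Q$ lies in a face $F$ of $P$ and in a face $G$ of the cube with $\dir(F)\cap\dir(G)$ meeting appropriately. I would try to show that the tangent cone of $Q$ at $v$ is the intersection of the tangent cone of $P$, which is generated by $B$-roots, with a coordinate orthant-type cone. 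Intersecting with a coordinate hyperplane $x_i=k$ is the restriction of the bisubmodular function to a coordinate minor, in the spirit of \Cref{proposition:polytope_contr_dele}, and this keeps edge directions in $B_n$. Intersecting with a half-space $x_i\le k$ creates new edges only inside $\{x_i=k\}$, and such edges are edges of the slice, hence again type~$B$ directions.

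If that local argument becomes messy, I would instead invoke directly the preservation result of Eur--Fink--Larson--Spink~\cite[Proposition~2.12]{Eur_Fink_Larson_Spink_2024}. It states exactly that slicing by integer coordinate half-spaces keeps an integral type~$B$ generalized permutohedron integral and of type~$B$.

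\emph{Conclusion.} Once $Q$ is a lattice polytope, its vertices lie in $[0,1]^n\cap\ZZ^n=\{0,1\}^n$, and its edges are type~$B$ directions. Then $Q$ is a $0/1$ polytope with $B$-root edges, and \Cref{proposition:char_poly_delta_mat} gives $Q=P_{\mathcal D}$ with $\mathcal D=\{X: e_X\in Q\}$. This set is nonempty because $Q\neq\emptyset$.
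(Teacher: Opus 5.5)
Your fallback, applying \cite[Proposition~2.12]{Eur_Fink_Larson_Spink_2024} to $P-c$ and translating back, is exactly the paper's one-line proof. Your closing appeal to \Cref{proposition:char_poly_delta_mat} only unpacks what that citation gives, so on that basis the proposal is fine.

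The self-contained route you put first, however, rests on a false claim. There is no bisubmodular analogue of Edmonds' intersection theorem: two integral bisubmodular polyhedra can meet in a non-integral polytope. For instance, $\conv\{\mathbf 0,e_1,e_2\}$ and $\conv\{\mathbf 0,e_2,e_1+e_2\}$ are delta-matroid polytopes. Their intersection $\{x_1\ge0,\ x_2\ge x_1,\ x_1+x_2\le1\}$ has the vertex $(\tfrac12,\tfrac12)$. So that paragraph does not establish integrality of $Q$; only the box case is special, and it needs its own argument. Also, calling the slice $P\cap\{x_i=k\}$ a coordinate minor is accurate only when $k$ is an extreme value of $x_i$ on $P$. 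In that case the slice is a face, as in \Cref{proposition:polytope_contr_dele}; intermediate slices are not faces, so that justification does not cover them.

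Your half-space idea repairs all of this elementarily if you cut by one half-space $\{x_i\le k\}$ or $\{x_i\ge k\}$, $k\in\ZZ$, at a time.
\begin{itemize}
\item A new vertex is a point where an edge $[v,v+tr]$ of $P$ crosses $\{x_i=k\}$, with $r$ a primitive type~$B$ root and $t\in\ZZ_{>0}$. Then $r_i=\pm1$, so the crossing parameter $(k-v_i)/r_i$ is an integer and the vertex is a lattice point.
\item A new edge lies in $\{x_i=k\}$ and is a transversal section of a $2$-face $G$ of $P$, with $\dir(G)=\lin(r,s)$ for type~$B$ roots $r,s$. Its direction is spanned by $s_ir-r_is$. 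A short case check on the supports of $r$ and $s$ shows this is a nonzero multiple of a type~$B$ root.
\end{itemize}
Each intermediate polytope is therefore again integral with type~$B$ edges. After the $2n$ cuts defining $C_c$ you obtain a lattice polytope in $c+[0,1]^n$ with type~$B$ edges, and \Cref{proposition:char_poly_delta_mat} finishes. This gives a self-contained proof of the lemma, whereas the paper outsources the whole statement to Eur--Fink--Larson--Spink.
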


\begin{proof}
    Apply \cite[Proposition~2.12]{Eur_Fink_Larson_Spink_2024} to $P-c$, and translate back by $c$.
\end{proof}

\begin{corollary}\label{corollary:typeB_GP}
    Every integral type~$B$ generalized permutohedron, with integrality taken in $\ZZ^n$,
    admits a regular dyadic triangulation. At the level of root directions and Coxeter fans, this
    is also a type~$C$ statement with the same lattice convention.
    In particular, so does every integral type~$D$ generalized permutohedron.
\end{corollary}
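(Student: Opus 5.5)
The plan is to combine the coordinate dicing of \Cref{lemma:dicing} with the uniform triangulating function $\varphi$ of \Cref{theorem:main}, following the regular dicing construction of~\cite[Section~2.1.3]{Haase_Paffenholz_Piechnik_Santos_2021}. Let $P\subseteq\RR^n$ be an integral type~$B$ generalized permutohedron and $A=P\cap\ZZ^n$. First, the cells $P\cap C_c$, for $c\in\ZZ^n$ with nonempty intersection, form a polyhedral subdivision $\mathcal S$ of $P$: these are the intersections of $P$ with the cells of the cube tiling. By \Cref{lemma:dicing} each cell is a lattice polytope $c+P_{\mathcal D_c}$, with vertices in $A$. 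It is also regular, being induced on $A$ by the quadratic weight $q(x)=\sum_i x_i^2$. Indeed, on $c+\{0,1\}^n$ we have $q(x)=\sum_i(2c_i+1)x_i-\sum_i c_i(c_i+1)$, which is affine. Moreover, since $x_i^2-(2c_i+1)x_i+c_i(c_i+1)=(x_i-c_i)(x_i-c_i-1)\ge0$ on integers, with equality exactly when $x_i\in\{c_i,c_i+1\}$, this affine function lies strictly below $q$ at the lattice points of $A$ outside $C_c$.

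Next, refine $\mathcal S$ using $q+\delta\Phi$, where $\Phi(x)\coloneqq\varphi(x-c)$ on cell $c$ would be cell-dependent. To avoid this, use the global function $\varphi$ of \eqref{equation:the_function} directly, defined on all of $\RR^n$. The key observation is that translating by $c$ changes $\varphi$ only by an affine function on each cube. Expanding $\varphi(x)=\sum_j\varepsilon^{j-1}x_j\ell_j(x_{j+1},\dots,x_n)$ at $x=c+z$ gives $\varphi(c+z)=\varphi(z)+{}$(terms affine in $z$)$+{}$a cross term $\sum_j\varepsilon^{j-1}z_j\,(\ell_j(c_{>j}+z_{>j})-\ell_j(z_{>j}))$. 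Because $\ell_j$ is affine, $\ell_j(c_{>j}+z_{>j})-\ell_j(z_{>j})$ is a constant depending only on $c$, so this cross term is linear in $z$. Hence on $C_c\cap\ZZ^n$, $\varphi$ and $z\mapsto\varphi(z-c)$ differ by an affine function and induce the same subdivision. By \Cref{theorem:main}, for $0<\varepsilon<\varepsilon_n$ the restriction of $\varphi$ therefore induces a dyadic triangulation of each cell $c+P_{\mathcal D_c}$; translation preserves dyadicness.

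Then apply the small-perturbation fact recalled after the definition of regular subdivisions: for small $\delta>0$, the weight $q+\delta\varphi$ on $A$ induces a refinement of $\mathcal S$ whose restriction to each cell is the subdivision induced by $\varphi$ on that cell's lattice points. Note that the lattice points of each cell are exactly its vertices, in $c+\{0,1\}^n$. The result is a regular triangulation of $P$ whose maximal simplices are maximal simplices of the cell triangulations, hence dyadic. Since $P$ is fixed, $A$ is finite and the plain (nonuniform) perturbation statement suffices; \Cref{lemma:uniform_perturbation} is not needed. The type~$C$ sentence is immediate because the class of polytopes and the lattice $\ZZ^n$ are unchanged. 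For type~$D$, note that type~$D$ edge directions $e_i\pm e_j$ are type~$B$ roots, so such polytopes are a subclass.

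The main obstacle is making sure the triangulations of the different cells agree on shared faces. This is exactly why a single global weight is used rather than independent choices per cube: compatibility is then automatic from regularity of $q+\delta\varphi$. The only real check is the affine-difference computation above, together with the fact that all points of $A$ in a cube are vertices of the cell, so no extra points disturb the restriction.
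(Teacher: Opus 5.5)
Your proposal is correct and is essentially the paper's proof: you dice $P$ by the unit cubes using \Cref{lemma:dicing} and the quadratic weight $q$, refine with the single global weight $q+\delta\varphi$, and use that $\varphi(c+z)-\varphi(z)$ is affine in $z$, so that \Cref{theorem:main} applies to each translated cell $(P\cap C_c)-c$. The only difference is that you verify directly that $q$ induces the dicing, via $(x_i-c_i)(x_i-c_i-1)\ge 0$ on integers, where the paper instead cites Haase--Paffenholz--Piechnik--Santos.
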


\begin{proof}
    Let $P\subseteq\RR^n$ be such a polytope. After an integer translation we may assume
    $P\subseteq[0,R]^n$ for some $R\in\NN$. Dice $P$ by all integer coordinate hyperplanes
    $x_i=k$. By \Cref{lemma:dicing}, every nonempty cell
    $S_c\coloneqq P\cap(c+[0,1]^n)$, with $c\in\{0,\ldots,R-1\}^n$, is a translate of a delta-matroid
    polytope. Hence~\cite[Section~2.1.3]{Haase_Paffenholz_Piechnik_Santos_2021} applies: the
    restriction of $q(x)=\sum_i x_i^2$ to $P\cap\ZZ^n$ induces the regular subdivision
    $\mathcal S_{\mathrm{dice}}$ into these intersections and their faces.

    Choose $0<\varepsilon<\varepsilon_n$ as in \Cref{theorem:main}, let $\varphi$ be the
    corresponding function~\eqref{equation:the_function}, and let $\mathcal T$ be the subdivision
    induced by $q+\delta\varphi$ for $\delta>0$ small enough, so that $\mathcal T$
    refines $\mathcal S_{\mathrm{dice}}$ and restricts on each cell $S_c$ to the subdivision induced
    by $\varphi|_{S_c}$. Writing $x=c+z$ with $z\in[0,1]^n$, we have for each $j$
    \[
        x_j\,\ell_j(x_{j+1},\ldots,x_n)
        \;=\;\underbrace{c_j\,\ell_j(c_{j+1}+z_{j+1},\ldots,c_n+z_n)}_{\text{affine in }z}
        \;+\;z_j\,\widetilde\ell_j(z_{j+1},\ldots,z_n),
    \]
    where $\widetilde\ell_j\coloneqq\ell_j(c_{j+1}+\,\cdot\,,\ldots,c_n+\,\cdot\,)$ is again
    $B$-generic because translation leaves the linear part unchanged. Moreover
    $\widetilde\ell_j-\ell_j$ is constant, so replacing every $\ell_j$ by
    $\widetilde\ell_j$ changes the function~\eqref{equation:the_function} only by an affine
    function of $z$. Hence the same value of $\varepsilon$ induces on the delta-matroid polytope
    $S_c-c$ exactly the triangulation covered by \Cref{theorem:main}. Therefore
    $\varphi|_{S_c}$ induces a dyadic triangulation of $S_c$.

    Since all these cell triangulations arise from the single global function $q+\delta\varphi$,
    they automatically agree on shared faces, and $\mathcal T$ is a regular dyadic
    triangulation of $P$. The type~$D$ statement follows because every type~$D$ root is a type~$B$
    root.
\end{proof}

\subsection{The dyadic decomposition property}

Recall that a lattice polytope $P$ has the \emph{integer decomposition property} (IDP) if every
$x\in kP\cap\ZZ^n$ is the sum of $k$ lattice points of $P$; a unimodular triangulation implies IDP.
We introduce the following $2$-local substitute.

\begin{definition}[Dyadic decomposition property]\label{definition:dyadic_decomposition}
    A lattice polytope $P\subseteq\RR^n$ has the \emph{dyadic decomposition property with exponent
    $m$}, where $m\in\ZZ_{\ge0}$, if for every $k\ge1$ and every $x\in kP\cap\ZZ^n$ there are
    lattice points $p_1,\ldots,p_{2^mk}\in P\cap\ZZ^n$, not necessarily distinct, such that
    \[
        2^m x=p_1+\cdots+p_{2^mk}.
    \]
    We say that $P$ has the \emph{dyadic decomposition property} if it has this property with some
    exponent~$m$.
\end{definition}

The property with exponent $0$ is precisely IDP, and if an exponent $m$ works, then so does every
larger exponent. Morales has shown that exponent $0$ need not work for a delta-matroid
polytope~\cite{Morales_2026}. The dyadic decomposition property may be viewed as a natural primal
analogue of total dual dyadicness~\cite{Abdi_Cornuejols_Guenin_Tuncel_2024}.

\begin{proposition}[Dyadic triangulations imply dyadic decomposition]
    \label{proposition:dyadic_triangulation_decomposition}
    Let $P\subseteq\RR^n$ be a lattice polytope admitting a dyadic triangulation $\mathcal T$
    with vertices in $P\cap\ZZ^n$, and let
    \[
        2^m\coloneqq\max_{T\in\mathcal T_{\max}}\nvol(T).
    \]
    Then $P$ has the dyadic decomposition property with exponent $m$.
\end{proposition}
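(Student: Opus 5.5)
Let $x\in kP\cap\ZZ^n$ and set $d=\dim P$. The plan is to reduce to a single simplex and then use the $2$-group structure from \Cref{lemma:dyadic_bridge}, by passing to the cone over $P$. Consider the homogenized points $\hat v=(v,1)\in\ZZ^{n+1}$ for $v\in P\cap\ZZ^n$, and the cone $C=\RR_{\ge0}(P\times\{1\})$. The triangulation $\mathcal T$ induces a triangulation of $C$ into simplicial cones $C_T=\RR_{\ge0}\{\hat v\mid v\in\vertt(T)\}$ for $T\in\mathcal T_{\max}$. The point $\hat x=(x,k)$ lies in $C$, so it lies in some $C_T$ with $T=\conv(v_0,\ldots,v_d)$ maximal.

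The key step is to bound the index of the lattice generated by $\hat v_0,\ldots,\hat v_d$. Let $\hat\Lambda=\lin(C)\cap\ZZ^{n+1}$ and $\Gamma=\ZZ\langle\hat v_0,\ldots,\hat v_d\rangle$. By the unimodular change of basis $\hat v_0,\hat v_i-\hat v_0$, we have $\Gamma=\ZZ\hat v_0\oplus\ZZ\langle (v_i-v_0,0)\rangle$. Moreover $\hat\Lambda=\ZZ\hat v_0\oplus((\dir(T)\cap\ZZ^n)\times\{0\})$: any integer point of $\lin(C)$ minus its last coordinate times $\hat v_0$ lies in $\dir(P)\times\{0\}$ and is integral. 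Hence $\hat\Lambda/\Gamma\cong(\dir(T)\cap\ZZ^n)/\ZZ\langle v_i-v_0\rangle$. This group has order $\nvol(T)=2^{m_T}$ with $m_T\le m$, as in the proof of \Cref{lemma:dyadic_bridge}. Here $\dir(T)=\dir(P)$ because $T$ is maximal. A finite group of order $2^{m_T}$ has exponent dividing $2^{m_T}$, hence dividing $2^m$. Therefore $2^m\hat x\in\Gamma$.

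Now we decompose. Write $2^m\hat x=\sum_i c_i\hat v_i$ with $c_i\in\ZZ$. Since the $\hat v_i$ are linearly independent and $\hat x\in C_T$, the coefficients are the unique real ones and are nonnegative. So the $c_i$ are nonnegative integers. Comparing last coordinates gives $\sum_i c_i=2^mk$. Listing each $v_i$ exactly $c_i$ times yields $2^mk$ lattice points of $P$ summing to $2^mx$, which is the claim.

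The main obstacle I expect is the lattice identification in the key step. One must check that the cone lattice quotient really equals the simplex quotient, so that the normalized volume controls the torsion exponent. This requires care that $\dir(T)$ coincides with $\dir(P)$ for maximal $T$, and that $\hat v_0$ splits off as a direct summand. The remaining steps, namely covering the cone by the $C_T$ and the nonnegativity of the coefficients, are standard.
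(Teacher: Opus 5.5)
Your proof is correct and is essentially the paper's argument. The paper works in affine coordinates: it places $x-kv_0$ in $\dir(T)\cap\ZZ^n$, uses that the quotient by the edge lattice has order $\nvol(T)=2^{a}$ to get $2^{a}(x-kv_0)\in\ZZ\langle v_i-v_0\rangle$, and then reads off the nonnegative integer multiplicities $\mu_i=2^{a}k\lambda_i$; your homogenization $\hat x=(x,k)$ packages the same computation, with the last coordinate handling the bookkeeping for $\mu_0$ (and the identification $\dir(T)=\dir(P)$ you flagged is not actually needed, since $x/k\in T$ already gives $x-kv_0\in\dir(T)$).
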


\begin{proof}
    Let $k\ge1$ and $x\in kP\cap\ZZ^n$. Since $x/k\in P$, it lies in some maximal cell
    $T=\conv(v_0,\ldots,v_d)$ of $\mathcal T$, say
    $x/k=\sum_{i=0}^d\lambda_iv_i$ with $\lambda_i\ge0$ and $\sum_i\lambda_i=1$. Write
    $\nvol(T)=2^{a}$ with $a\le m$ and $\Lambda=\dir(T)\cap\ZZ^n$. The vector $x-kv_0$ is an integer
    vector of $\dir(T)$, hence lies in $\Lambda$, and by \Cref{lemma:dyadic_bridge} the edge lattice
    $\ZZ\langle v_i-v_0\rangle_{i\ge1}$ has index $2^{a}$ in $\Lambda$. Therefore
    $2^{a}(x-kv_0)\in\ZZ\langle v_i-v_0\rangle_{i\ge1}$. Since the edge vectors are linearly
    independent, uniqueness of their real coordinates gives $\mu_i\coloneqq 2^{a}k\lambda_i\in\ZZ$
    for $1\le i\le d$; and $\mu_0\coloneqq 2^{a}k-\sum_{i\ge1}\mu_i=2^ak\lambda_0$ is then an
    integer as well. All the $\mu_i$ are nonnegative, they sum to $2^{a}k$, and
    \[
        2^{a}x\;=\;2^{a}k\sum_{i}\lambda_iv_i\;=\;\sum_i\mu_iv_i .
    \]
    Multiplying by $2^{m-a}$ gives the statement, the $p_j$ being the $v_i$ repeated
    $2^{m-a}\mu_i$ times.
\end{proof}

\begin{corollary}[Uniform dyadic decomposition in type~$B$]\label{corollary:dyadic_IDP}
    For each $n$ there is an exponent $m_n$ such that every integral type~$B_n$ generalized
    permutohedron $P\subseteq\RR^n$ has the dyadic decomposition property with exponent $m_n$.
    Equivalently, for every $k\ge1$ and every $x\in kP\cap\ZZ^n$, there are lattice points
    $p_1,\ldots,p_{2^{m_n}k}\in P\cap\ZZ^n$ such that
    \[
        2^{m_n}x=p_1+\cdots+p_{2^{m_n}k}.
    \]
    The same conclusion may be phrased in type~$C_n$ terminology only with the ambient lattice
    still equal to $\ZZ^n$.
\end{corollary}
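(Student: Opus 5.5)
The plan is to derive \Cref{corollary:dyadic_IDP} from \Cref{proposition:dyadic_triangulation_decomposition} applied to the triangulation built in \Cref{corollary:typeB_GP}. The one substantive point is uniformity. A priori the exponent $m$ in \Cref{proposition:dyadic_triangulation_decomposition} depends on $P$, because it is the logarithm of the largest normalized volume of a maximal simplex. So I need to bound that volume by a constant depending only on $n$.

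First I will fix $n$, the $B$-generic functionals $\ell_j$, and one value $0<\varepsilon<\varepsilon_n$ as in \Cref{theorem:main}. Let $P$ be an integral type~$B_n$ generalized permutohedron, and let $\mathcal T$ be the regular dyadic triangulation from the proof of \Cref{corollary:typeB_GP}. By construction $\mathcal T$ refines the coordinate dicing. Every maximal simplex $T\in\mathcal T_{\max}$ therefore lies in a single cell $S_c=c+P_{\mathcal D}$, and, up to translating by $c$, it is a maximal simplex of the triangulation of the delta-matroid polytope $P_{\mathcal D}\subseteq\RR^n$ induced by $\varphi$. Its vertices are lattice points of $S_c$, hence vertices of $c+[0,1]^n$. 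Normalized volume is translation invariant, so $\nvol(T)$ equals the normalized volume of a simplex with vertices in $\{0,1\}^n$.

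Second, I will bound that quantity uniformly. There are only finitely many lattice simplices with vertices in $\{0,1\}^n$, or equivalently finitely many delta-matroids on $[n]$ with finitely many triangulations each. So
\[
    N_n\coloneqq\max\{\nvol(T)\mid T \text{ a dyadic simplex with }\vertt(T)\subseteq\{0,1\}^n\}
\]
is finite. It is a maximum of powers of two, so $N_n=2^{m_n}$ for some $m_n\ge0$. For an explicit bound one could use Hadamard's inequality on $\{0,1\}$-matrices, but finiteness already suffices. Then $\max_{T\in\mathcal T_{\max}}\nvol(T)=2^{a}\le 2^{m_n}$. The proof of \Cref{proposition:dyadic_triangulation_decomposition} works for any exponent $m$ with $2^m$ at least the maximal volume, as its last step multiplies by $2^{m-a}$. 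This gives the dyadic decomposition property with exponent $m_n$, and the displayed reformulation is just \Cref{definition:dyadic_decomposition}.

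Third, I will check that the volumes are measured in the right lattice. \Cref{proposition:dyadic_triangulation_decomposition} uses $\nvol(T)$ relative to $\dir(T)\cap\ZZ^n$, where $T$ is a maximal cell of the triangulation of $P$, so it has dimension $\dim P$. This is the same normalized volume as in the finiteness bound, so the two are consistent. The type~$C_n$ remark follows because the polytope class and the lattice $\ZZ^n$ are unchanged.

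The only real obstacle is the uniformity in the second step. It depends on knowing that all maximal simplices of the constructed triangulation have $0/1$ vertices up to integer translation, which is exactly what the dicing refinement in \Cref{corollary:typeB_GP} guarantees. Once that is in place, the rest is the finite maximum.
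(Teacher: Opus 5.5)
Your proposal is correct and follows the paper's argument. Uniformity comes from the fact that every maximal simplex of the dicing-refined triangulation of \Cref{corollary:typeB_GP} is, up to integer translation, a dyadic simplex with vertices in $\{0,1\}^n$, and there are only finitely many such simplices; you then apply \Cref{proposition:dyadic_triangulation_decomposition} with the exponent raised to $m_n$. The only cosmetic difference is that you maximize over all dyadic $0/1$ simplices, whereas the paper maximizes over the simplices of the finitely many triangulations induced by the fixed $\varphi$, so your bound does not depend on the choice of the $\ell_j$ and $\varepsilon$.
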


\begin{proof}
    Fix the $B$-generic functionals and one
    $0<\varepsilon<\varepsilon_n$ from \Cref{theorem:main}. There are only finitely many
    delta-matroids on $[n]$, so among all maximal simplices in the triangulations induced by
    $\varphi$ on their polytopes there is a largest normalized volume, say $2^{m_n}$. In the dicing
    construction of \Cref{corollary:typeB_GP}, the triangulation on every unit-box cell, after
    integer translation, is one of these triangulations. Consequently every maximal simplex of
    the resulting triangulation has normalized volume at most $2^{m_n}$, independently of $P$.
    Applying \Cref{proposition:dyadic_triangulation_decomposition} and, when necessary, increasing
    the resulting exponent to $m_n$ proves the statement.
\end{proof}

\begin{remark}[Standard enumerative consequences]\label{remark:enumerative}
Let $P$ have dimension $d$ and let $\mathcal T$ be one of the triangulations above. For
$T\in\mathcal T_{\max}$, write $\nvol(T)=2^{k_T}$. Additivity of normalized volume gives
\[
    \nvol(P)=\sum_{T\in\mathcal T_{\max}}\nvol(T)
             =\sum_{T\in\mathcal T_{\max}}2^{k_T}.
\]
There is also a standard cancellation-free refinement for Ehrhart series. Choose a generic point
in the relative interior of $P$ and make every maximal simplex half-open by removing the facets
visible from that point. These half-open simplices partition $P$~\cite{Koeppe_Verdoolaege_2008}.
If $H_T$ is the half-open simplex associated with $T$, their fundamental parallelepipeds give
\[
    h^*_P(z)=\sum_{T\in\mathcal T_{\max}}h^*_{H_T}(z),
    \qquad h^*_{H_T}(1)=\nvol(T)=2^{k_T},
\]
where every $h^*_{H_T}$ has nonnegative integer coefficients. These are general consequences of a
lattice triangulation; dyadicness supplies the power-of-two value of each summand at $z=1$.
\end{remark}

\begin{remark}[Alcoved polytopes and the dual viewpoint]\label{remark:AB_sides}
    Lattice polytopes with type~$B$ root facet normals are the type~$B$ \emph{alcoved polytopes} of
    Lam and Postnikov~\cite{Lam_Postnikov_2007,Lam_Postnikov_2018}; they admit regular unimodular
    triangulations~\cite[Proposition~3.4]{Haase_Paffenholz_Piechnik_Santos_2021}. This facet-normal
    result is distinct from our edge-direction theorem; compare \Cref{proposition:facet_normals}.
\end{remark}

\end{document}